\DeclareMathAlphabet{\mathpzc}{OT1}{pzc}{m}{it}
\DeclareMathAlphabet{\mathantt}{OT1}{antt}{li}{it}
\DeclareMathOperator{\ad}{ad}
\newenvironment{hproof}{%
  \proof}{\endproof}
  \newcommand{\zf}{\mathpzc{f}}
\begin{document}
\title[Harnack inequalities for curvature flows]{Harnack inequalities for curvature flows in Riemannian and Lorentzian manifolds}
\author[P. Bryan]{Paul Bryan}
\address{School of Mathematics and Physics, The University of Queensland, St Lucia, Brisbane, 4072, Australia}
\email{pabryan@gmail.com}
\author[M.N. Ivaki]{Mohammad N. Ivaki}
\address{Institut f\"{u}r Diskrete Mathematik und Geometrie, Technische Universit\"{a}t Wien,
Wiedner Hauptstr. 8--10, 1040 Wien, Austria}
\address{Department of Mathematics and Statistics, Concordia University, Montreal, QC,
H3G 1M8, Canada}
\email{mohammad.ivaki@tuwien.ac.at}
\author[J. Scheuer]{Julian Scheuer}
\address{Albert-Ludwigs-Universit\"{a}t,
Mathematisches Institut, Eckerstr. 1, 79104
Freiburg, Germany}
\email{julian.scheuer@math.uni-freiburg.de}
\dedicatory{}
\subjclass[2010]{}
\keywords{Curvature flows, Harnack estimates}
\date{\today}
\begin{abstract}
We obtain Harnack estimates for a class of curvature flows in Riemannian manifolds of constant non-negative sectional curvature as well as in the Lorentzian Minkowski and de Sitter spaces. Furthermore, we prove a Harnack estimate with a bonus term for mean curvature flow in locally symmetric Riemannian Einstein manifold of non-negative sectional curvature. Using a concept of ”duality” for strictly convex hypersurfaces, we
also obtain a new type of inequalities, so-called ”pseudo”-Harnack inequalities, for expanding flows in the sphere and in the hyperbolic space.
\end{abstract}

\maketitle
\tableofcontents
\section{Introduction}
\label{sec:intro}
Let $(N=N^{n+1},\bar{g})$, $n\geq 2$, be a Riemannian or Lorentzian manifold and let $M=M^{n}$ be a smooth, complete and orientable manifold. For flat ambient spaces, we use $\ip{\cdot}{\cdot}$ instead of $\bar{g}.$ Put $\s = 1$ in the Riemannian case and $\s = -1$ in the Lorentzian case.

Let
$x\cn M\x[0,T^{\ast})\ra N$
be a family of strictly convex\footnote{One of the choices of the normal $\nu$ yields a positive definite second fundamental form $h$ in the Gaussian formula \begin{align*}\bar\n_{x_{*}X}(x_{*}Y)=x_{*}\br{\n_{X}Y}-\s h(X,Y)\nu\quad\forall X,Y\in  T M. \end{align*}} and spacelike\footnote{The induced metric is positive definite.} embeddings, which evolves by the curvature flow
\eq{\label{Flow}\dot{x}=-\s f\nu-x_{\ast}(\operatorname{grad}_hf),}
where $\nu$ is a unit normal vector field along $M_{t}=x(M,t)$ (which satisfies $\s=\bar{g}(\nu,\nu)$ from the spacelike condition), and with $\operatorname{grad}_hf$ defined by
\eq{
 h(\grad_h f, X) = df (X)\quad\forall X\in TM
}
or in coordinates,
\[\operatorname{grad}_hf:=b^{ij}df(\partial_j)\partial_i.\]
Here $(b^{ij})$ is the inverse of the second fundamental form $(h_{ij})$. The speed $f$ is a smooth and strictly monotone function of the principal curvatures, which may also depend on other data, depending on the ambient space, compare \cref{SpeedAss}.

Let \(r:M\times [0,T^{\ast})\to M\) be the one-parameter family of diffeomorphisms associated with $\grad_h f$; that is, $r(\cdot,0)=\operatorname{id}$ and $\dot{r}=\operatorname{grad}_hf$. If $M$ is compact, then for each $t \in [0, T^{\ast}),$ $r_t : M \to M$ is uniquely defined and a diffeomorphism. If $M$ is co-compact so that $M/G$ is compact where $G$ is a Lie Group acting on $M$, and the flow $M_t$ is invariant under $G$, again $r_t$ is a uniquely defined diffeomorphism for each $t \in [0, T^{\ast})$. Defining $\check{x} (\xi, t) := x (r(\xi,t), t)$, $\check{\nu} (\xi, t) :=\nu (r(\xi,t), t)$ and $\check{f}(\xi,t):=f(r(\xi,t), t)$, we see that the flow \eqref{Flow} is equivalent to the curvature flow
\eq{\label{FlowStandard}
\check{x}&:M\times [0,T^{\ast})\to N\\
\dot{\check{x}} &= -\s \check{f} \check{\nu}.}
We call $\check{x}$ the standard parameterization as in \cite{Andrews:09/1994}.

Differential Harnack inequalities are pointwise derivative estimates which usually enable one to compare the speed of a solution to a curvature flow at different points in space-time.
Central to our approach in obtaining Harnack inequalities for a class of curvature flows (\ref{FlowStandard}) is a reparameterization of the flow given by the flow \eqref{Flow}.
In a Euclidean background, $N = \R^{n+1}$, the Gauss map $\nu: M\times [0,T^{\ast}) \to \S^n$ is a diffeomorphism for $t$ if $x(M,t)$ is strictly convex. The Gauss map parameterization $y: \S^n\times [0,T^{\ast}) \to \R^{n+1}$, cf.~\cite{Andrews:09/1994}, is such that $\nu(y(z,t),t) = z$ for all $z \in \S^n$ whence $\dot{\nu} = 0$. Furthermore, calculations may be performed with respect to the fixed, canonical, round metric $g_{\operatorname{can}}$ on $\S^{n}$. These two properties, a static metric and static normal provide immense benefit, not only in simplifying the generally long computations associated with differential Harnack inequalities, but also by lending insight into why such long computations yield such a simple, elegant differential Harnack inequality.

The Gauss map parameterization just described is manifestly Euclidean, and given the utility of such a parameterization, analogous results in other background spaces should be highly prized. The cornerstone of our approach is that the normal \(\nu\) is static in the parameterization \eqref{Flow} and the time derivative of the induced metric \(g\) is only felt through the changing parameterization, $x$. See (\ref{BE-n}), analogous to the Gauss map parameterization, valid in arbitrary backgrounds.

For the Harnack quantity we define
\begin{equation}
\label{eq:Q}
u := \frac{\dot{f}}{f}.
\end{equation}
Therefore, $u=\partial_t \ln |f|$ just as for Li-Yau \cite{LiYau:/1986} and Andrews \cite{Andrews:09/1994}. Then writing $u$ in the standard parameterization, we find that
\eq{\label{eq:change_param}
  \frac{\dot{f}}{f} &= \fr{\dot{\check{f}} - df (\dot{r})}{f}= \frac{\dot{\check{f}}-\check{b}(d\check{f},d\check{f})}{\check{f}},
}
which is precisely the standard Harnack quantity in the Euclidean space.

Our first theorem includes previously known Harnack inequalities in the Euclidean space and extends them by allowing the speed to depend on the ``support function". Furthermore, it provides Harnack inequalities for a class of curvature flows in the Minkowski space which are completely new.

Suppose there is a subgroup $G$ of future preserving isometries of the Minkowski space  such that $I(x(M))=x(M)$ for all $I\in G$ and $G$ acts properly discontinuously on $M.$ Let us put $K=M/G$. If $K$ is compact, we say that $M$ is co-compact. Let $I_{\ast}$ denote the linear part of $I\in G$ (e.q., $I=I_{\ast}+\vec{v}$ such that $I_{\ast}\in O^+(n,1),~\vec{v}\in\mathbb{R}^{n,1}$,  where $O^+(n,1)$ is the space of future-preserving linear transformations \emph{preserving the Lorentzian inner product} and $\mathbb{R}^{n,1}$ denotes the Minkowski space) and also put $G_{\ast}=\{I_{\ast}:I\in G\}.$ If in addition $G_{\ast}=G$, we say $M$ is standard.

Write $\mathbb{H}^n$ for the hyperbolic space. A function $\psi:\mathbb{H}^n\to \mathbb{R}$ is called $G_{\ast}$-invariant, if $\psi(I_{\ast}z)=\psi(z)$ for all $z\in \mathbb{H}^n$ and $I\in G.$ Therefore, $\psi:\mathbb{H}^n/G_{\ast}\to \mathbb{R}$ is well-defined.
\Theo{thm}{Euclidean}{
Let $N=N^{n+1}$ be either the Euclidean space $\R^{n+1}$ or the Minkowski space $\R^{n,1}$ and $M=M^{n}$ be a smooth, connected, complete and orientable manifold, which is compact in case $N=\R^{n+1}.$ Let
$x\cn M\x (0,T^{\ast})\ra N$
be a family of strictly convex, spacelike embeddings that solves the flow equation\footnote{For a better readability we omit the $\check{\hp{x}}$ in this and the following theorems for flows in the standard parametrization.}
\eq{\label{StandardFlow}\dot{x}=-\s f\nu,}
where $f=\p(s)\psi(\nu)\mrm{sgn}(p)F^{p}$ and
\begin{itemize}
\item $p\neq 0,$
\item  $s=\sigma\langle x,\nu\rangle$ is the support function,
\item  If $p\neq -1,$ $\p\in C^{\8}(\R_{+})$ is positive and satisfies
\eq{\s\p'\leq 0\quad\mbox{and}\quad \mrm{sgn}\br{p(p+1)}\br{\fr{1-p}{p}\p'^2+\p''\p}\geq 0,}
\item $F$ is a positive, strictly monotone, $1$-homogeneous curvature function that is inverse concave for $-1< p$ and inverse convex  for $p> -1$.
\end{itemize}
Suppose one of the following conditions holds:
\begin{enumerate}
  \item $N=\mathbb{R}^{n+1}$, $\psi\in C^{\8}(\S^{n})$ is positive and the solution is compact, strictly convex and if $\varphi\neq 1$ then $s(\cdot,t)>0$ for all $t.$
  \item $N=\mathbb{R}^{n,1}$, $\varphi=\psi\equiv1$, the solution is co-compact, spacelike and strictly convex.
  \item $N=\mathbb{R}^{n,1}$, $\varphi\equiv1$, the solution is co-compact, spacelike and strictly convex and $\psi:\mathbb{H}^n\to \mathbb{R}_+$ is a $G_{\ast}$-invariant, smooth function.
  \item $N=\mathbb{R}^{n,1}$, $\psi:\mathbb{H}^n\to \mathbb{R}_+$ is a $G_{\ast}$-invariant, smooth function, $s(\cdot,t)>0$ for all $t$, and the solution is standard, spacelike and strictly convex.
\end{enumerate}
Then for $p>-1$ the following Harnack inequality holds
\eq{\label{DiffHarnack}\del_{t}f-b(\nabla f,\nabla f)+\frac{p}{(p+1)t}f\geq 0\quad \forall t>0,}
and the inequality is reversed if $p<-1.$

Also, for $p=-1,\p=1$, under either of these four conditions the following statements hold:
\begin{enumerate}
  \item If $F$ is inverse concave,  then \eq{\inf \fr{\dot{f}-b(\n f,\n f)}{f}\quad \mbox{is increasing}}
  \item If $F$ is inverse convex, then \eq{\sup \fr{\dot{f}-b(\n f,\n f)}{f}\quad \mbox{is decreasing}.}
\end{enumerate}
}
\Theo{rem}{QuotientsofEuc}{
The proof of \cref{Euclidean} does not make any use of the simply connectedness of $\R^{n+1},$ so it is also possible to allow $N$ to be a quotient of $\R^{n+1},$ for example, a flat torus $\mathbb{T}^{n},$ $n\geq 3$.
}
\Theo{rem}{s well-defined on K}{Note for a standard, spacelike and strictly convex hypersurface $x(M)$, $s$ is well-defined on $K:$
\begin{align*}
s(Ix)&=-\langle Ix,\nu(Ix)\rangle=-\langle Ix,I_{\ast}\nu(x)\rangle=-\langle Ix,I\nu(x)\rangle=-\langle x,\nu(x)\rangle=s(x).
\end{align*}
}
\Theo{rem}{positive support function}{Let $\mathcal{U}$ denote the interior of $\{\langle z,z\rangle\leq 0, z_0\geq 0\}$. If $M$ is a standard, spacelike, strictly convex hypersurface that is contained in $\mathcal{U}$, then $M$ has a positive support function, cf. \cite[equ.~(13)]{FillastreVeronelli:02/2015}.
}
\Theo{rem}{expanding hyperbolic space}{Assume $N=\mathbb{R}^{n,1}$ and $F$ is a positive, $1$-homogeneous curvature function. Consider $x(M,t)=((1+p)f(1,\ldots,1)t)^{\frac{1}{1+p}}\mathbb{H}^{n}$ for $t\in(0,\infty)$, a solution to the expanding flow with $f=F^p$ (assuming $\varphi=\psi=1$). Then equality holds in the Harnack inequality. In fact, the support function of $x(M,t)$ is given by $s_t=((1+p)f(1,\ldots,1)t)^{\frac{1}{1+p}}.$ Hence, we have
\[\dot{s}_t=\frac{f(1,\ldots,1)}{((1+p)f(1,\ldots,1)t)^{\frac{p}{1+p}}}=f(x,t).\]
This verifies that $x(M,t)$ serves as a solution for any $t>0.$ Also, calculate
\begin{align*}
\partial_tf&=-\frac{pf(1,\ldots,1)^2}{((1+p)f(1,\ldots,1)t)^{\frac{1+2p}{1+p}}}\\
\frac{p}{(p+1)t}f&=\frac{p}{(p+1)t}\frac{f(1,\ldots,1)}{((1+p)f(1,\ldots,1)t)^{\frac{p}{1+p}}}.
\end{align*}
Therefore, for this particular solution the equality is obtained in the Harnack inequality. Note that if $t\to 0$, then $x(M,t)\to\{\langle z,z\rangle=0: z_0\geq 0\}$ (e.q., boundary of $\mathcal{U}$) with support function  equal to zero.
}
\cref{Euclidean} includes and extends (even in the Euclidean case) the previously known differential Harnack estimates in \cite{Andrews:09/1994}, \cite{Chow:06/1991,Li:/2011,Wang:11/2007}. For more general functions of the mean curvature in the Euclidean case see \cite{Smoczyk:/1997}. To our knowledge, the only available Harnack estimates for curvature flows having the support function in their speeds are for centro-affine normal flows \cite{Ivaki:11/2015,Ivaki:09/2015}. In this respect, our result is new even in the Euclidean case.

In other ambient spaces, far less is known, due to the complications which arise from the ambient curvature tensor.
So far, the only setting of non-constant sectional curvature for which we could obtain a Harnack inequality with a bonus term for the mean curvature flow is the locally symmetric Riemannian Einstein manifolds of non-negative sectional curvature.
\Theo{thm}{Einstein}{
Let $N=N^{n+1}$ be a locally symmetric Riemannian Einstein manifold of non-negative sectional curvature. Assume that $M=M^{n}$ is a smooth, connected, compact and orientable manifold. Then along any strictly convex solution $x\cn M\x (0,T^{\ast})\ra N$ to the mean curvature flow
\eq{\dot{x}=-H\nu} there holds
\[\del_{t}H-b(\nabla H,\nabla H)-\fr{\-R}{n+1}H+\fr{1}{2t}H\geq 0,\]
where $\-R$ is the constant scalar curvature of $N$.
}
Examples of suitable $N$ satisfying the assumptions of the theorem are irreducible symmetric spaces of compact type and quotients thereof \cite[7.75]{Besse:/1987}. In particular, an interesting example that satisfies the assumptions of this theorem is the complex projective space $N^{2n}=\mathbb{CP}^n.$ Compare \cite{PipoliSinestrari:04/2016} for a recent result on mean curvature flow in $\mathbb{CP}^n$.

If we have a more symmetric ambient space, we can obtain Harnack inequalities for a larger class of speeds. The next theorem includes our Harnack inequalities from \cite{BryanIvaki:08/2015,BIS1} and presents new Harnack inequalities, for example, in de Sitter space. Note as with \Cref{Euclidean} (see \Cref{QuotientsofEuc}) and \Cref{Einstein} the results hold for quotients and not just the simply connected case.
\Theo{thm}{SphereHarnack}{
Assume that $f =F^p$ with $0< p\leq 1,$ where $F$ is a positive, strictly monotone, convex, $1$-homogeneous curvature function. Let $M=M^{n}$ be a smooth, connected, compact, orientable manifold
and $x\cn M\x (0,T^{\ast})\ra N$ be a spacelike solution to the flow equation
\eq{\dot{x}=-\s f\nu.}
Suppose either
\begin{enumerate}
  \item $N$ is a Riemannian $(\s=1)$ spaceform with constant sectional curvature $K_{N}=1$, and the solution is strictly convex or
  \item $N$ is a Lorentzian $(\s=-1)$ spaceform with constant sectional curvature $K_{N}=1$, and the solution satisfies $0<\kappa_i\leq 1.$
\end{enumerate}
Then the following Harnack inequality holds along the flow:
\[
\partial_t f-b(\nabla f,\nabla f)+\frac{p}{(p+1)t}f\geq 0\quad \forall t>0.
\]
}

\begin{rem}
In \cref{Einstein,,SphereHarnack}, it would not effect the result, if we attached an anisotropic factor to the respective speeds $H$ and $F^p$, i.e., if we considered
\eq{f=\psi(\nu)H,\quad~\text{or}~f=\psi(\nu)F^p, }
where $\psi$ is a positive smooth function on the unit sphere bundle in $TN,$ which is invariant under parallel transport in $(N,\-g)$.
\end{rem}

Furthermore, employing  duality, we obtain ``pseudo"-Harnack inequalities for a class of curvature flows in the spherical and the hyperbolic space.
\Theo{thm}{pseudoHarnack}{
Suppose $F$ is a positive, strictly monotone, $1$-homogeneous, inverse convex curvature function and $f =-F^p$ with $-1\leq p<0$. Let $M=M^{n}$ be a smooth, connected, compact, orientable manifold and  $x\cn M\x (0,T^{\ast})\ra N$ be a solution to the flow equation
$\dot{x}=- f\nu.$
Suppose either
\begin{enumerate}
  \item $N$ is the sphere, and the solution is strictly convex or
  \item $N$ is the hyperbolic space, and the solution is strictly horoconvex \footnote{All principal curvatures are greater than $1$.}.
\end{enumerate}
Then the following inequality holds along the flow:
\[
\partial_t F^p+\frac{p}{(p-1)t}F^p\geq 0 \quad \forall t>0.
\]
}
The term pseudo-Harnack reflects the fact that the inequality in \Cref{pseudoHarnack} does not have the gradient term as opposed to the inequalities in Theorems \ref{Euclidean} and \ref{SphereHarnack} and thus would not allow one to compare the solution at different points in space-time, nevertheless, it is a point-wise estimate on $\partial_tf$, which is independent of the initial data. This new type of inequality suggests while in a negatively curved ambient space the standard Harnack quantity $u$ may fail to yield any interesting inequality, yet a weaker form (obtained by dropping the gradient term) may provide a useful inequality.

\subsubsection*{Connection to the cross curvature flow}
In \cite{ChowHamilton:/2004}, Chow and Hamilton introduced an interesting fully nonlinear heat flow for negatively (or positively) curved metrics on a 3-manifold, called the ``cross curvature flow'' (in short ``XCF"). This nonlinear curvature flow of metrics is dual to the Ricci flow in the following sense. The identity map from a Riemannian 3-manifold to itself, where the domain manifold has the cross curvature tensor as the metric (assuming the sectional curvature is either everywhere negative or everywhere positive), is harmonic, while the identity map from a Riemannian 3-manifold to itself, where the target manifold has the  Ricci curvature tensor as the metric (assuming the Ricci curvature is either everywhere negative or everywhere positive), is harmonic. Chow and Hamilton prove a monotonicity formula for XCF and give strong indications that the XCF should deform any negatively curved metric on a compact 3-manifold to a hyperbolic metric, modulo scaling. Also, they express strong hopes that the XCF should enjoy a Harnack inequality. Recently, it has appeared in \cite{AndrewsChenFangMcCoy:/2015} that if the universal cover of the initial 3-manifold is isometrically embeddable as a hypersurface in Minkowski 4-space (or Euclidean 4-space), then the Gauss curvature flow of the hypersurface yields the cross curvature flow of the induced metric. When, also, the manifold is closed, the global existence and convergence hold \cite{AndrewsChenFangMcCoy:/2015}. In that case, it is a corollary of Theorem \ref{Euclidean} that indeed a Harnack estimate for XCF exists; see inequality (\ref{cross harnack}).

\subsubsection*{Moser parabolic Harnack Inequality}
A differential Harnack inequality of the form \eqref{DiffHarnack} is related closely to the well-known Moser-type Harnack inequalities. Note that
the normal speed $f$ of the flow evolves by a parabolic equation, and as such a parabolic Harnack inequality as derived by Moser \cite{Moser:02/1964} is expected. As initially described by Li and Yau \cite{LiYau:/1986} and later adapted by Hamilton in the case of curvature flows \cite{Hamilton:/1986,Hamilton:/1993,Hamilton:/1995}, integrating along space-time paths yields the Moser parabolic Harnack inequality. In fact, the Li-Yau-Hamilton type differential inequality is equivalent to the Moser parabolic Harnack inequality, a fact not often expressed explicitly and described here by the next theorem. All the Harnack inequalities described in our main theorems are all of Li-Yau-Hamilton type \eqref{eq:liyauhamilton_harnack}, and hence we obtain a Moser parabolic Harnack inequality \eqref{eq:moser_harnack} in all those cases.
\begin{thm}
\label{thm:moser_liyauhamilton}Let $N=N^{n+1}$ be a semi-Riemannian manifold and $M=M^{n}$ be a smooth, connected, complete, orientable manifold. Suppose $x\cn M\x (0,T^{\ast}) \ra N$ is a family of strictly convex embeddings satisfying
\eq{\dot{x}=-\s f\nu,}
where $f\cn  M\x (0,T^{\ast})\ra N$ is a smooth and nowhere vanishing function. Assume that $q\in C^{0}((0,T))$. Then
\begin{equation}
\label{eq:liyauhamilton_harnack}
\begin{cases}
    \frac{\partial_tf - h(\grad_{h} f, \grad_{h} f)}{f} \geq -q(t), & \hbox{if}~f>0, \\
    \frac{\partial_tf - h(\grad_{h} f, \grad_{h} f)}{f} \leq -q(t), & \hbox{if}~f<0,
  \end{cases}
\end{equation}
if and only if for all $x_1, x_2 \in M$ and $t_2 > t_1 > 0$ there holds
\begin{equation}
\label{eq:moser_harnack}
f (x_1, t_1) \leq \frac{e^{Q(t_2)}}{e^{Q(t_1)}} e^{\Delta/4} f (x_2, t_2),
\end{equation}
where $Q(t_2) - Q(t_1) = \int_{t_1}^{t_2} q(t) dt$,
\[
\Delta = \left\{
           \begin{array}{ll}
             \inf_{\gamma} \int_{t_1}^{t_2} \frac{1}{f} h(\dot\gamma, \dot\gamma) dt, & \hbox{if}~ f>0, \\
             \sup_{\gamma} \int_{t_1}^{t_2} \frac{1}{f} h(\dot\gamma, \dot\gamma) dt, & \hbox{if}~ f<0,
           \end{array}
         \right.
\]
and the infimum and supremum are taken over all smooth curves $\gamma$ with $\gamma(t_i) = x_i$, $i = 1,2$.

In particular,
\[
\frac{\partial_tf - h(\grad_{h} f, \grad_{h} f)}{f} \geq - \frac{p}{p+1} \frac{1}{t}
\]
if and only if
\[
f (x_1, t_1) \leq \left(\frac{t_2}{t_1}\right)^{\fr{p}{p+1}} e^{\Delta/4} f (x_2, t_2).
\]
\end{thm}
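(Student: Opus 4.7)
The plan is to exploit the completion-of-squares identity
\begin{equation*}
h(\grad_h f, \grad_h f) + h(\grad_h f, \dot\gamma) + \tfrac{1}{4}h(\dot\gamma, \dot\gamma) = h\!\left(\grad_h f + \tfrac{1}{2}\dot\gamma,\; \grad_h f + \tfrac{1}{2}\dot\gamma\right) \geq 0,
\end{equation*}
which is valid because $h$ is positive definite by the strict convexity plus spacelike hypotheses. This identity is precisely what links the $h(\grad_h f,\grad_h f)$ term appearing in the differential Harnack to the $h(\dot\gamma,\dot\gamma)$ term appearing in $\Delta$.

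For the forward direction, assume first $f>0$ and fix any smooth curve $\gamma:[t_1,t_2]\to M$ with $\gamma(t_i)=x_i$. The chain rule together with $df(\dot\gamma)=h(\grad_h f,\dot\gamma)$ gives
\begin{equation*}
\frac{d}{dt}\ln f(\gamma(t),t) = \frac{\partial_t f-h(\grad_h f,\grad_h f)}{f} + \frac{h(\grad_h f,\grad_h f)+h(\grad_h f,\dot\gamma)}{f}.
\end{equation*}
I would apply the differential inequality to the first summand and the square identity to the second, obtaining
\begin{equation*}
\frac{d}{dt}\ln f(\gamma(t),t) \geq -q(t)-\frac{h(\dot\gamma,\dot\gamma)}{4f(\gamma(t),t)}.
\end{equation*}
Integrating over $[t_1,t_2]$, exponentiating, and then taking the infimum over admissible $\gamma$ yields the Moser form \eqref{eq:moser_harnack}. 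The case $f<0$ is handled the same way using $\ln|f|=\ln(-f)$; the original differential inequality still reads $\partial_t f-h(\grad_h f,\grad_h f)\geq -q\,f$ after multiplying by the negative factor $f$, and dividing back by $f$ inside the integrand of $\Delta$ swaps the infimum for a supremum.

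For the backward direction, I would fix $(x_0,t_0)$, pick an arbitrary $v\in T_{x_0}M$, and choose any smooth curve with $\gamma(t_0)=x_0$ and $\dot\gamma(t_0)=v$. Applying the Moser inequality with $t_1=t_0-\epsilon$, $t_2=t_0$, $x_1=\gamma(t_1)$, $x_2=x_0$, and noting that the chosen $\gamma$ furnishes an admissible competitor (so that $\Delta\leq\int_{t_1}^{t_2}\tfrac{1}{f}h(\dot\gamma,\dot\gamma)\,dt$ in the $f>0$ case, and analogously with $\geq$ for $f<0$), I take logarithms, divide by $\epsilon$, and let $\epsilon\to 0^{+}$ to obtain
\begin{equation*}
\partial_t f(x_0,t_0)+df|_{x_0}(v)+q(t_0)\,f(x_0,t_0)+\tfrac{1}{4}h(v,v)\,\operatorname{sgn} f\geq 0.
\end{equation*}
Since $v$ is arbitrary and $h$ is definite, minimizing (resp.\ maximizing) the left-hand side over $v$ -- the critical point being $v=-2\grad_h f$ -- recovers exactly the differential inequality at $(x_0,t_0)$.

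The main technical point, and the reason this equivalence is usually left implicit, sits in the backward direction: one might worry that the extremizing curve defining $\Delta$ need not exist, but the one-sided-limit argument above bypasses this by using any fixed admissible $\gamma$. The final assertion of the theorem is then immediate from the general equivalence by noting $\int_{t_1}^{t_2}\frac{p}{p+1}\frac{dt}{t}=\frac{p}{p+1}\ln(t_2/t_1)$, which exponentiates to the ratio $(t_2/t_1)^{p/(p+1)}$.
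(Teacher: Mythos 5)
Your proof is correct and follows essentially the same route as the paper: the completion-of-squares inequality $h\bigl(\grad_h f + \tfrac{1}{2}X, \grad_h f + \tfrac{1}{2}X\bigr) \geq 0$ to pass between the gradient term and the $h(\dot\gamma,\dot\gamma)$ term, integration of $\ln|f|$ along space-time paths for the forward implication, and extraction of the pointwise inequality from the integral bound for the converse. The only cosmetic difference is that you spell out the $\epsilon\to 0^{+}$ limiting argument in the converse direction, which the paper leaves implicit with the remark ``hence the inequality holds pointwise.''
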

\begin{proof}
Now, let $X$ be an arbitrary tangent vector to $M$. Note that
\eq{
h\br{\grad_{h} f + \frac{1}{2}X, \grad_{h} f + \frac{1}{2}X}\geq 0.
}
Therefore,
\begin{equation}
\label{eq:CauchySchwarzPolarisation}
h(\grad_{h} f, X) + \frac{1}{4} h(X, X) \geq - h(\grad_{h} f, \grad_{h} f).
\end{equation}
with equality precisely when $X=-2\grad_{h} f.$

Hence the Li-Yau-Hamilton differential inequality \eqref{eq:liyauhamilton_harnack} holds if and only if,
\begin{equation}
\label{eq:standard_harnack_all_vectors}
\frac{\partial_t f + h(\grad_{h} f, X)+ \frac{1}{4} h(X, X)}{f} \geq - q(t)\quad \forall X\in TM,
\end{equation}
or with the opposite inequality in the case $f < 0$.

Next we show that the Moser parabolic Harnack inequality, \eqref{eq:moser_harnack} is equivalent to equation \eqref{eq:standard_harnack_all_vectors} by integrating along space-time paths. Let $x_1,x_2 \in M$ and $\gamma\cn [t_1, t_2] \to M$ be any curve connecting $x_1$ at time $t_1$ to $x_2$ at time $t_2$; that is, $\gamma(t_i) = x_i, i = 1,2$. Keeping in mind that $f$ is either strictly positive or strictly negative, we have
\[
\begin{split}
\ln |f| (x_2, t_2) - \ln |f| (x_1, t_1) &= \int_{t_1}^{t_2} \partial_t \left[\ln |f| (\gamma(t), t))\right] dt \\
&= \int_{t_1}^{t_2} \frac{\partial_t f}{f} + \frac{1}{f} \gamma'(f)  dt \\
&= \int_{t_1}^{t_2} \frac{\partial_t f + h (\grad_{h} f, \dot\gamma)}{f} dt.
\end{split}
\]
Taking exponentials,
\begin{equation}
\label{eq:exponential_harnack}
\frac{f (x_2, t_2)}{f (x_1, t_1)} = \exp \left(\int_{t_1}^{t_2} \frac{\partial_t f + h (\grad_{h} f, \dot\gamma)}{f} dt\right)
\end{equation}
where we may drop the absolute value on $|f|$ since both numerator and denominator have the same sign.

Assuming equation \eqref{eq:standard_harnack_all_vectors} holds, we have
\[
\exp\left(\int_{t_1}^{t_2} \frac{\partial_t f + h (\grad_{h} f, \dot\gamma)}{f} dt\right) \geq e^{Q(t_1) - Q(t_2)} \exp\left(- \frac{1}{4} \int_{t_1}^{t_2} \frac{1}{f} h(\dot\gamma, \dot\gamma) dt\right),
\]
for every $x_1, x_2$ and every $\gamma$ joining $x_1$ at $t_1$ to $x_2$ at $t_2$. The opposite inequality holds when $f < 0$. Then using equation \eqref{eq:exponential_harnack}, we obtain for $f>0,$
\begin{equation}\label{abc1}
\frac{f (x_2, t_2)}{f (x_1, t_1)} \geq e^{Q(t_1) - Q(t_2)} \exp\left(- \frac{1}{4} \int_{t_1}^{t_2} \frac{1}{f} h(\dot\gamma, \dot\gamma) dt\right)
\end{equation}
and for $f<0,$
\begin{align}\label{abc2}
\frac{f (x_2, t_2)}{f (x_1, t_1)} \leq e^{Q(t_1) - Q(t_2)} \exp\left(- \frac{1}{4} \int_{t_1}^{t_2} \frac{1}{f} h(\dot\gamma, \dot\gamma) dt\right).
\end{align}
In (\ref{abc1}), since the left hand side is independent of $\gamma$, we may take the supremum of the right hand side over all $\gamma$ joining $x_1$ at $t_1$ to $x_2$ at $t_2$ to obtain
\[
\frac{f (x_2, t_2)}{f (x_1, t_1)} \geq \sup_{\gamma} \left\{e^{Q(t_1) - Q(t_2)} \exp\left(- \frac{1}{4} \int_{t_1}^{t_2} \frac{1}{f} h(\dot\gamma, \dot\gamma) dt\right)\right\} = e^{Q(t_1) - Q(t_2)} e^{-\Delta}.
\]
Rearranging gives the Moser parabolic Harnack \eqref{eq:moser_harnack}. Similarly in (\ref{abc2}), take the infimum and rearrange to obtain the Moser parabolic Harnack \eqref{eq:moser_harnack}

Conversely, if the Moser parabolic Harnack \eqref{eq:moser_harnack} holds, then equation \eqref{eq:exponential_harnack} implies that
\[
\exp \left(\int_{t_1}^{t_2} \frac{\partial_t f + h (\grad_{h} f, \dot\gamma)}{f} dt\right) \geq \frac{e^{Q(t_1)}}{e^{Q(t_2)}} e^{-\Delta/4}
\]
with the opposite sign when $f < 0$. Taking logarithms yields
\[
\begin{split}
\int_{t_1}^{t_2} \frac{\partial_t f + h (\grad_{h} f, \dot\gamma)}{f} dt &\geq -\int_{t_1}^{t_2} q(t) dt - \Delta/4 \\
&\geq - \int_{t_1}^{t_2} q(t) dt -  \frac{1}{4} \int_{t_1}^{t_2} \frac{1}{f} h(\dot\gamma, \dot\gamma) dt
\end{split}
\]
for every $x_1, x_2$, $t_1, t_2$ and $\gamma$. Hence the inequality holds pointwise which is precisely equation \eqref{eq:standard_harnack_all_vectors}.
\end{proof}

\subsubsection*{Solitons}
The Harnack inequality is closely related to solitons in flat backgrounds (other backgrounds do not have sufficiently many isometries to provide symmetries of the flow).
The philosophy put forward by Hamilton in \cite{Hamilton:/1993,Hamilton:/1995} is that equality should be attained on expanding solitons, just as equality in the Li-Yau Harnack inequality \cite{LiYau:/1986} which is attained by the heat kernel, itself an expanding soliton. Thus Hamilton follows a procedure of differentiating the soliton equation to obtain soliton identities which eventually lead to the appropriate form for the Harnack quantity.

We follow this philosophy by showing that the parametrization \eqref{Flow} is naturally suited to the deduction of Harnack inequalities. For the purposes of this discussion it is enough to consider Euclidean space $(N, \bar{g}) = (\R^{n+1}, \ip{\cdot}{\cdot})$, homothetic solitons, and degree $p$-homogeneous speeds $f$, $p\neq -1$. Similar arguments also apply to Minkowski space.

Let $M$ be a smooth, connected, compact and orientable manifold. A homothetic soliton may be described as a pair $(x_0, \lambda)$ with $x_0\cn M \to N$ an immersion and $\lambda : [0, T^{\ast}) \to \R$ a smooth, positive function satisfying
\begin{equation}
\label{eq:soliton}
\begin{cases}
x(\xi, t) &= \lambda(t) x_0(\xi), \\
\lambda(0) &= 1, \\
\ip{\partial_t x}{\nu} &= -f(\cW).
\end{cases}
\end{equation}
Simple scaling arguments give
\begin{equation}
\label{eq:scaling}
\begin{cases}
\cW &= \frac{1}{\lambda} \cW_0, \\
\nu &= \nu_0, \\
f &= \frac{1}{\lambda^p} f_0.
\end{cases}
\end{equation}
Here we think of $f(\xi, t) = f(\cW(\xi, t))$ as a smooth function $M \to \R$ and likewise for $f_0(\xi)=f(\cW(\xi, 0))$. We also, by the usual abuse of notation, write $x$ for the position vector field in $\R^{n+1}$ at the point $x$.

Using equations \eqref{eq:soliton} and \eqref{eq:scaling}  we have
\[
f(\cW_0) = -(\partial_t \lambda) \lambda^p \ip{x_0}{\nu_0},
\]
which leads to
\begin{equation}
\label{eq:soliton_equations}
\begin{cases}
f(\cW_0) &= C_0 \ip{x_0}{\nu_0}, \\
\lambda(t) &= \sqrt[p+1]{1 - (p+1)C_0 t},
\end{cases}
\end{equation}
where $C_0$ is a constant. This equation is necessary and sufficient for homothetic solitons, completely characterizing them. From \eqref{eq:scaling} we see that the normal $\nu$ is fixed under the flow \eqref{eq:soliton} and hence necessarily $x$ must evolve by \eqref{Flow}. To see this, let the flow
\[\dot{x}=-\s f\nu-x_{\ast}V\]
have the property $\del_t\nu=0$ for some $V\in TM$. For all $X\in TM$ we have
\[0=\ip{\del_t\nu}{x_{\ast}X}=-\ip{\nu}{\-\n_X \dot{x}}=Xf-h(X,V),\]
which is only possible if $V=\grad_h f.$
This was already pointed out by Chow in \cite{Chow:06/1991}, whereas he did not use this flow to deduce the Harnack inequality. Due to this relation, the reparametrization \eqref{Flow} seems naturally suited to Harnack inequalities, since under a homothetic soliton the ratio of maximal to minimal curvature is in fact constant in time.

Let us investigate the behavior of our proposed Harnack quantity
\[u=\fr{\dot f}{f}\]
on a soliton. By \eqref{eq:scaling} we get
\[u=-p\fr{\dot\l}{\l}=pC_0\l^{-(p+1)}\]
and
\eq{\label{eq:SolitonODE}\dot{u}=-(p+1)pC_0\l^{-(p+2)}\dot{\l}=(p+1)C_0\l^{-(p+1)}u=\fr{p+1}{p}u^2, \quad
                u(0)=pC_0.}
Therefore, the soliton ODE, \eqref{eq:SolitonODE}, is very simple to deduce (note on a soliton $u$ is just a function of time in the parametrization \eqref{Flow}). Hence the hope that \eqref{Flow} might simplify the excruciating calculations in obtaining  Harnack inequalities is justified. Indeed, one of the major achievements of the present paper is our ability to deduce the evolution of \eqref{eq:Q} for strictly convex flows in {\it{any}} Riemannian or Lorentzian ambient space for a huge range of speed functions. This is quite a surprise, having in mind the tremendous computational effort in previous works.

\subsection*{Acknowledgment}
The work of the first author was supported  in part by the EPSRC on a Programme Grant entitled ``Singularities of Geometric Partial Differential Equations'' reference number EP/K00865X/1. The work of the second author was supported by Austrian Science Fund (FWF) Project
M1716-N25 and the European Research Council (ERC) Project 306445. The work of the third author has been funded by the "Deutsche Forschungsgemeinschaft" (DFG, German research foundation) within the research grant "Harnack inequalities for curvature flows and applications", grant number SCHE 1879/1-1.

\section{Background and Notation}
\label{sec:background}
\subsection*{Notation and Basic Definitions}
\label{subsec:bg_notation}
For a semi-Riemannian manifold $(M,g)$,  flat- and sharp-operators are defined as follows. For
$T\in T_{\xi}^{l,k}(M)$
let $T^{\flat}\in T_{\xi}^{l-1,k+1}(M)$ be defined by the requirement
\eq{T^{\flat}(X_{1},\dots, X_{k+1},Y^{1},\dots,Y^{l-1}):=g(T(X_{1},\dots,X_{k},Y^{1},\dots,Y^{l-1},\cdot),X_{k+1})}
for all $X_i\in T_{\xi}M$ and $Y^{k}\in T_{\xi}^{\ast}M.$
In coordinates, this reads
\eq{(T^{\flat})^{j_{1}\dots j_{l-1}}_{i_{1}\dots i_{k+1}}=g_{i_{k+1}j_{l}}T^{j_{1}\dots j_{l}}_{i_{1}\dots i_{k}},}
i.e., the $\flat$ operator always lowers the last index to the last slot.
We stipulate the sharp operator to reverse this transformation, i.e.,
\eq{\br{T^{\sharp}}^{\flat}=T;}
equivalently,
\eq{T(X_{1},\dots, X_{k},Y^{1},\dots,Y^{l})=g(T^{\sharp}(X_{1},\dots, X_{k-1},Y^{1},\dots,Y^{l},\cdot),X_{k}).}
If the metric is denoted by some other symbol, i.e., $\-g$, these operators will also be furnished accordingly, e.g., $\-\flat$. We will also use this notation even if $g$ happens to be negative definite.

For a spacelike embedding into a semi-Riemannian manifold $(N^{n+1},\-g)$,
\begin{align*}x\cn M^{n}\ra N^{n+1},\end{align*}
we let $g=x^{*}\-g$ be the induced metric and the second fundamental form is defined by the Gaussian formula for some given local normal field $\nu$,
\eq{\label{Gauss}\-\n_{x_{*}X}(x_{*}Y)=x_{*}\br{\n_{X}Y}-\s h(X,Y)\nu\quad\forall X,Y\in  T M,}
where $\-\n$ and $\n$ denote the Levi-Civita connections of $\-g$ and $g$ respectively. The Weingarten map is given by
\begin{align*}g(\mc{W}(X),Y)=h(X,Y).\end{align*}
From this and differentiating $0=\-g(\nu,{x_{*}Y})$, we obtain the Weingarten equation
\eq{\label{Weingarten}\-g(\-\n_{x_{*}X}\nu,x_{*}Y)=h(X,Y)\quad\forall X,Y\in  T M.}
Generally, geometric quantities of the ambient manifold are denoted by an overbar, e.g., our definition of the $(1,3)$ Riemannian curvature tensor of $\-g$ is given by
\eq{\label{Riem}\overline{\mrm{Rm}}(\-X,\-Y)\-Z=\-\n_{\-X}\-\n_{\-Y}\-Z-\-\n_{\-Y}\-\n_{\-X}\-Z-\-\n_{[\-X,\-Y]}\-Z}
and the $(0,4)$ version is
\begin{align*}\overline{\mrm{Rm}}^{\-\flat}(\-X,\-Y,\-Z,\-W)=\-g\br{\overline{\mrm{Rm}}(\-X,\-Y)\-Z,\-W},\end{align*}
where we suppress the $\-\flat$, if no ambiguities are possible.
Hence we have the Codazzi equation
\eq{\label{Codazzi}\br{\n_{Z}h}(X,Y)=\n h (X,Y,Z)=\n h(X,Z,Y)-\overline{\mrm{Rm}}(\nu,X,Y,Z).}
Note that
\[\nabla h (Z,X,Y)=g(\nabla_Y \mc{W}(X),Z).\]
Therefore, we may rewrite (\ref{Codazzi}) equivalently as follows
\begin{align*}\n_Y\mc{W}(X)=\n_X\mc{W}(Y)-\br{\overline{\mrm{Rm}}(X,Y)\nu}^{\top},\end{align*}
where $\hp{}^{\top}$ denotes the projection onto $TM$ and
 we stipulate that whenever we insert $X\in  T M$ into ambient tensors, we understand $X$ to be the push-forward $x_{*}X.$

For a bilinear form $B$, $B^t$ denotes its transpose,
\begin{align*}B^t(X,Y)=B(Y,X)\end{align*}
and $B_{\mrm{sym}}$ denotes its symmetrization,
\begin{align*}B_{\mrm{sym}}:=\fr 12(B+B^t).\end{align*}

\subsection*{Speed Functions}
\label{subsec:bg_speed}
We introduce the form of the speeds $f$ we consider in \eqref{Flow}. First we revisit some of the theory of curvature functions.
\subsection*{Curvature functions}
It is well-known that a symmetric function (i.e., invariant under permutation of variables) $\Phi\in C^{\8}(\G)$ on an open and symmetric domain $\G\sub\R^{n}$ induces a function $F\in C^{\8}(\O)$ on an open subset of endomorphisms of an $n$-dimensional real vector space $E$, which are selfadjoint with respect to some fixed underlying scalar product $E$; see, for example, \cite{Andrews:/1994b,Andrews:/2007,CaffarelliNirenbergSpruck:12/1985,Gerhardt:/2006}. These approaches all suffer from the drawback that certain well-known formulas for derivatives of $F$ only hold in direction of selfadjoint operators. Since our reparametrization \eqref{Flow} produces several non-selfadjoint operators, we would like to have extended versions of these formulas. In this section, we collect some of the properties which hold whenever $F$ is defined on an open subset of the space of endomorphisms $\cL(E)$, e.g.,  the mean curvature $H(\cW)=\tr(\cW)$. The details can be found in \cite{Scheuer:03/2017}.
\subsubsection*{Symmetric functions and Operator functions}
{\defn{
Let $E$ be an $n$-dimensional real vector space and $\G\sub\R^{n}$ be an open and symmetric domain.
\begin{itemize}
\item[(i)]  $\cL(E)$ denotes the space of endomorphisms of $E$ and $\mc{D}_{\G}(E)\sub \cL(E)$ is the set of all diagonalizable endomorphisms with eigenvalues in $\G$.
\item[(ii)] On $\mc{D}_{\R^{n}}(E)$ we define the {\it{eigenvalue map}} $\mrm{EV}$ by
\[\mrm{EV}(\cW):=\kappa=(\k_{1},\dots,\k_{n}),\]
where $\kappa$ is the ordered $n$-tuple of eigenvalues of $\cW$ with $\k_{1}\leq\dots\leq \k_{n}$.
\item[(iii)] Let $\Phi\in C^{\8}(\G)$ be a symmetric function. $F$ is said to be an {\it{associated operator function}} of $\Phi$, if there exists an open set $\O\sub \cL(E)$, such that $F\in C^{\8}(\O)$ and
\begin{align*}F_{|\mc{D}_{\G}(E)\cap \Omega}=\Phi\circ\mrm{EV}_{|\mc{D}_{\G}(E)\cap \Omega}.\end{align*}
\end{itemize}
 }}

It is convenient to give some examples right away.
\begin{example}
The power sums for $0\leq k\in \Z$ is defined by
\eq{p_{k}(\k):=\sum_{i=1}^{n}\k_{i}^{k}.}
The associated operator functions $P_k$ defined on $\O=\cL(E)$ are given by
\eq{P_k(\cW):=\mrm{Tr}(\cW^k).}
Write $s_k$ for the $k$-th elementary symmetric polynomial defined on $\G=\R^{n},$
\begin{align*}s_{k}(\k_{1},\ldots,\k_{n}):=\sum_{1\leq i_{1}<\dots<i_{k}\leq n}\prod_{j=1}^{k}\k_{i_{j}}.\end{align*}
It is well-known that $s_k$ can be written as a function of the power sums,
\eq{s_k=\chi(p_1,\ldots,p_m),}
where $\chi$ is a polynomial, cf. \cite{Mead:10/1992}. Hence the associated operator functions $H_k$ are
\eq{H_k=\chi(P_1,\ldots,P_m).}
Note that the following identity holds
\eq{H_k(\cW)=\fr{1}{k!}\fr{d^k}{d t^k}\det(I+t\cW)_{|t=0},}
cf. \cite[equ.~(2.1.31)]{Gerhardt:/2006}.

Moreover, if $\Phi\in C^{\8}(\G)$ for an open, symmetric domain $\G\sub\R^{n}$, then $\Phi$ can be written as a smooth function of the elementary symmetric polynomials (see \cite{Glaeser:01/1963}), and hence $\Phi$ can also be written as a smooth function of the power sums,
\eq{\label{SymPhi}\Phi=\rho(p_{1},\ldots,p_{m}).}
 and the associated operator function, defined on some open set $\O\sub\cL(E)$, is
\eq{\label{SymF}F=\rho(P_{1},\ldots,P_{m}).}
\end{example}

For such a pair of symmetric functions $\Phi\in C^{\8}(\G)$ and $F\in C^{\8}(\O),$ we will now state some of the properties of their derivatives, which in particular recover the well-known formulas when restricting these maps to selfadjoint transformations. However, note the difference in \eqref{DerSymmFct-A} with, for example, \cite[Thm.~5.1]{Andrews:/2007} and \cite[Lemma~2.1.14]{Gerhardt:/2006}.

{\thm{\label{DerSymmFct} The following two statements hold.
\begin{itemize}
\item[(i)] Let $E$ be an $n$-dimensional vector space, and assume that $\G\sub\R^{n}$ is open and symmetric. Consider $\Phi\in C^{\8}(\G)$ defined in \eqref{SymPhi} and let $F\in C^{\8}(\O)$ be the associated operator function \eqref{SymF}. Then we have
\begin{align*}
dF(\cW)(B)=\tr(F'(\cW)\circ B)\quad\forall\cW\in \O,~\forall B\in \cL(E)
\end{align*}
for some $F'(\cW)\in \cL(E).$ Moreover, if $\cW\in \mc{D}_{\G}(E)$, then $F'(\cW)$ and $\mc{W}$ are simultaneously diagonalizable. For a basis $\{e_1,\dots,e_n\}$ of eigenvectors for $\cW$ with eigenvalues $\kappa=(\k_1,\dots,\k_n),$ the eigenvalue $F^i$ of $F'(\cW)$ with eigenvector $e_i$ is given by
\begin{align*}
F^{i}(\cW)=\fr{\del \Phi}{\del\k_{i}}(\k).
\end{align*}
\item[(ii)] Suppose in addition that $\G$ is  convex, $\cW\in \mc{D}_{\G}(E)$ and  $(\eta^{i}_{j})$ is the matrix representation of some $\eta\in \cL(E)$ with respect to a basis of eigenvectors of $\cW$. Then there holds
\eq{\label{DerSymmFct-A}d^{2}F(\cW)(\eta,\eta)=\sum_{i,j=1}^n\fr{\del^2\Phi}{\del\k_i\del\k_j}\eta^i_i\eta^j_j+\sum_{i\neq j}^{n}\fr{\fr{\del \Phi}{\del \k_{i}}-\fr{\del \Phi}{\del\k_{j}}}{\k_{i}-\k_{j}}\eta^{i}_{j}\eta^{j}_{i},}
where $f$ is evaluated at $\k$.
The latter quotient is also well-defined in case $\k_{i}=\k_{j}$ for some $i\neq j$.
\end{itemize}
}
}
\begin{hproof}
By a direct calculation one can show that this result holds for all power sums and the chain rule carries this over to all functions of them.
Details can be found in \cite{Scheuer:03/2017}.
\end{hproof}
We will also need an associated map defined on bilinear forms with the function $F$. Let us write $\mc{B}(E)$ and $\mc{B}_{+}(E)$ for the set of bilinear forms and positive definite bilinear forms on $E$ respectively.
{\prop{\label{BilF}
Let $E$ be an $n$-dimensional real vector space, $\O\sub\cL(E)$ open and $F\in C^{\8}(\O)$ as in \eqref{SymF}. Define the open set
\eq{\hat{\O}:=\{(g,h)\in \mc{B}_{+}(E)\x\mc{B}(E)\cn h_{\mrm{sym}}^{\sharp_{g}}\in \O \}}
and a map
\begin{align*}
\cF&\cn \hat\O\ra \R\\
\cF(g,h)&:= F( h_{\mrm{sym}}^{\sharp_g}).
\end{align*}
Then $\cF$ is smooth and for any $a\in \mc{B}(E)$ we have
\eq{\label{BilF-A}d_{h}\cF(g,h)(a):=\fr{\del \cF}{\del h}(g,h)(a)=\tr(F'( h_{\mrm{sym}}^{\sharp_{g}})\circ a_{\mrm{sym}}^{\sharp_{g}})=dF(  h_{\mrm{sym}}^{\sharp_g}) (a_\mrm{sym}^{\sharp_{g}}).}
}}
\subsubsection*{Properties of symmetric functions}
Let us put
\begin{align*}\G_{+}:=\{\k\in\R^{n}\cn \k_{i}>0,~ i=1,\ldots,n\}.
\end{align*}
\Theo{defn}{InverseCF}{
Let $\Phi\in C^{\8}(\G_{+})$ be symmetric and assume $F\in C^{\8}(\O)$ is the associated operator function given by \eqref{SymF}. The {\it{inverse symmetric function}} of $\Phi$ is defined by
\eq{\~\Phi(\k_{i}):=\fr{1}{\Phi(\k_{i}^{-1})}}
and the associated operator function is defined as
\eq{\~F(\cW):=\fr{1}{F(\cW^{-1})}}
for all $\cW\in  \mrm{GL}_{n}(\R)$ with $\cW^{-1}\in \O$.
}
\begin{defn}\label{PropSymm}
Let $\G\sub\R^{n}$ be open and symmetric and $\Phi\in C^{\8}(\G)$ be symmetric.
\begin{itemize}
\item[(i)]~$\Phi$ is {\it{strictly monotone}}, if
\begin{align*}\fr{\del \Phi}{\del\k_{i}}(\k)> 0\quad\forall \k\in\G \quad \forall 1\leq i\leq n.\end{align*}
\item[(ii)] Assume in addition that $\G$ is a cone. $\Phi$ is {\it{homogeneous of degree $p\in \R$}}, if
\begin{align*}\Phi(\l \k)=\l^{p}\Phi(\k)\quad\forall \l>0\quad\forall \k\in\G.\end{align*}
\item[(iii)] $\Phi$ is {\it{inverse concave (inverse convex)}}, if $\~\Phi$
is concave (convex).
\end{itemize}
\end{defn}
These properties carry over to the associated operator function:
{\prop{\label{MonoF}
Let $E$ be an $n$-dimensional vector space, and assume that $\G\sub\R^{n}$ is open and symmetric. Consider $\Phi\in C^{\8}(\G)$ and $F\in C^{\8}(\O)$ as in \eqref{SymPhi} and \eqref{SymF}.
Then the following statements hold.
\begin{itemize}
\item[(i)] If $\Phi$ is strictly monotone, then $F'(\cW)$ has positive eigenvalues at every $\cW\in \mc{D}_{\G}(E)$ and the bilinear form $d_{h}\cF(g,h)$ from \cref{BilF} is positive definite at all pairs $(g,h)$ with $h_{\mrm{sym}}^{\sharp_g}\in \mc{D}_{\G}(E).$
\item[(ii)] If $\G$ is a cone and $\Phi$ is homogeneous of degree $p$, then $\mc{D}_{\G}(E)$ is a cone and $F_{|\mc{D}_{\G}(E)}$ is homogeneous of degree $p$.
 \end{itemize}
}}
\begin{rem}
Slightly abusing terminology, especially when it comes to convexity or concavity, we say $F$ is {\it{strictly monotone, homogeneous, concave or convex}}, if $\Phi$ has the corresponding properties.
\end{rem}

The following inequality for $1$-homogeneous curvature functions is very useful. The idea comes from \cite[Thm.~2.3]{Andrews:/2007} and also appeared in a similar form in \cite[Lem.~14]{BIS1}. The proof can be found in \cite{Scheuer:03/2017}.
{\prop{\label{InvConc}
Let $E$ be an $n$-dimensional real vector space. Let $\Phi\in C^{\8}(\G_{+})$ and $F\in C^{\8}(\O)$ be as in \eqref{SymPhi} and \eqref{SymF} with the further assumptions that $F$ is symmetric, positive, strictly monotone and homogeneous of degree one.
The following statement holds:

For every pair $\cW\in \mc{D}_{\G_{+}}(E)$ and $g\in \mc{B}_{+}(E)$ such that $\cW$ is selfadjoint with respect to $g$, we have
\eq{\label{InvConc-A}dF(\cW)(\ad_g(\eta)\circ \cW^{-1}\circ\eta)\geq F^{-1}\br{dF(\cW)(\eta)}^{2}\quad \forall \eta\in \cL(E),}
where $\ad_g(\eta)$ is the adjoint with respect to $g$.
}}
\begin{rem}\label{prop2.4rem}
A simple calculation reveals that if we set
\begin{align*}f=\mrm{sgn}(p)F^{p},\quad p\neq 0,\end{align*}
then from the inequality \eqref{InvConc-A} 
we have
\eq{\label{InvConc-A1}df(\cW)(\ad_{g}(\eta)\circ \cW^{-1}\circ\eta)\geq \fr{1}{p}f^{-1}\br{df(\cW)(\eta)}^{2}.}
\end{rem}
{\example{
Let us define
\[\Gamma_k:=\{\k\in \R^{n}:~ s_1(\k)>0,\ldots, s_{k}(\k)>0\}.\]
\begin{enumerate}
  \item $s_{1}(\k)=H_{1}(\mc{W})=\mrm{Tr}(\mc{W})$
  is strictly monotone and inverse concave on $\G_{1}.$
    \item
$s_{n}(\k)=H_{n}(\mc{W})=\det(\mc{W})$
is strictly monotone on $\G_{n}.$
\item The quotients
\begin{align*}
q_k&:\Gamma_k\to \R,\quad \kappa\mapsto\fr{s_k}{s_{k-1}}
\end{align*}
are monotone and concave; see \cite[Cor.~5.3]{Andrews:/2007} and \cite[Thm.~2.5]{HuiskenSinestrari:09/1999}. Moreover, they are inverse concave; cf. \cite[Cor.~2.4, Thm.~2.6]{Andrews:/2007}.
\end{enumerate}
}}
\subsubsection*{Curvature functions}
\begin{defn}\label{DefCF}
Let $M$ be a smooth manifold and $\O\sub T^{1,1}(M)$ be an open set. A function $F\in C^{\8}(\O)$ is said to be  a {\it{curvature function}} if there is a symmetric function $\Phi\in C^{\8}(\G)$ of the form \eqref{SymPhi} on an open and symmetric set $\G$ with the following property: For each $\xi\in M$, the function $F$ restricted to the fiber at $\xi$ is the associated operator function of $\Phi$ given by \eqref{SymF}.
\end{defn}
  A curvature function $F$ is said to have the properties from \cref{PropSymm}, if $\Phi$ has the corresponding properties.

The normal variation speeds for the flow (\ref{Flow}) do not solely depend on the principal curvatures, and they are of a more general form satisfying the following assumptions.
\Theo{ass}{SpeedAss}{
 $f$ is a non-vanishing velocity of the form
\eq{\label{Speed}f\cn \R_{+}\x \mathbb{U} &\x \O\ra \R\\ f(s,\nu,\cW)&=\mrm{sgn}(p)\p(s)\psi(\nu)F^{p}(\cW),}
where
\begin{itemize}
\item[(i)] $p\neq 0$,
\item[(ii)] $\mathbb{U}$ is the unit sphere bundle on $N$ (including timelike unit vectors),
\item[(iii)] $\p\in C^{\8}(\R_{+})$ is a positive function acting on the support function $s$ and $\p\equiv 1$ if $N$ is neither the Euclidean nor the Minkowski space,
\item[(iv)] $\psi\in C^{\8}(\mathbb{U})$ is a positive function on the unit bundle, such that $\psi$ is invariant under parallel transport in $(N,\-g)$,
\item[(v)] $F$ is a positive, strictly monotone and $1$-homogeneous curvature function of the form \eqref{SymF}, associated with a $\Phi\in C^{\8}(\G_{+})$, which is
\begin{itemize}
\item[(v-1)] inverse concave for $p>-1$ and inverse convex for $p<-1$, if $N=\R^{n+1}$ or $N=\R^{n,1}$,
\item[(v-2)] convex, if $N$ has constant nonzero sectional curvature and
\item[(v-3)] the mean curvature $H$, if $N$ has nonconstant sectional curvature.
\end{itemize}
\end{itemize}
}
{\rem{ The following remarks are in order:
\begin{itemize}

\item[(i)] Let us write $\pr\cn T^{1,1}(M)\ra M$ for the canonical projection. For every $X\in T^{1,1}(M)$, and $(v,0)\in T^{1,0}_{\pr(X)}(M)\x T^{1,1}_{\pr(X)}(M)\simeq T_{X}(T^{1,1}(M))$, there holds
\eq{\label{DefCF-1}dF(X)(v,0)=0.}
\pf{For any $v \in T^{1,0}(M)$, choose a curve $\a$ with $\a(0) = \pr(v)$, $\a'(0) = v$.
Let $Z\cn M \to T^{1,1}(M)$ be the zero section.
Then $t \mapsto Z(\alpha(t))$ is a curve in $T^{1,1}(M)$ with $(Z \circ \alpha)'(0) = (v, 0)$, hence $dF (v, 0) = \partial_t F(Z(\alpha(t)))|_{t=0}$.
Since $F$ is a curvature function, there holds $F(Z(\alpha(t))) \equiv \Phi(0)$, where the $0$ on the right-hand side is just the zero operator in fibre.
Thus $\partial_t F(Z(\alpha(t))) \equiv 0$ and so $dF(v, 0) = 0$ as required.}

\item[(ii)] In a local coordinate system, any point in $T^{1,1}(M)$ can be expressed as $(\xi^k,a^i_j)$, such that $(\xi^k)$ is a local coordinate system for $M$ and $(a^i_j)$ are the components of an arbitrary tensor field. For a curvature function $F$, due to \eqref{DefCF-1}, $dF$ acts only in the fibres; that is, $d_{\xi}F=0$. Hence by \cref{DerSymmFct} there exists an operator $F'\cn \O\ra T^{1,1}(M)$, such that for any $(1,1)$-tensor field $\mc{W}$ (which is a section of $T^{1,1}(M)$) and all $B\in T_\xi^{1,1}(M),~ v\in T^{1,0}_\xi(M)$, we have
\eq{dF(\xi,\cW(\xi))(v,B)&=d_{\cW}F(\xi,\cW(\xi))(B)\\
            &=\mrm{Tr}(F'(\xi,\cW(\xi))\circ B).}
For any vector field $X$ on $M$, metric $g$ on $M$, curvature function $F$ and any $g$-selfadjoint  $(1,1)$-tensor field $\cW$, we also have (and will frequently use)
\eq{X(F(\xi,\cW(\xi))&=d_{\cW}F(\xi,\cW(\xi))(\n_X \cW(\xi))\\
&=\mrm{Tr}\br{F'(\xi,\cW(\xi))\circ \n_X\cW(\xi)}\\
&=d_h\cF(g,h)\br{\n_X h},}
where $h=\cW^{\flat}$ and we assume $\n g=0$. A similar formula applies for time derivatives (where of course $\dot{g}$ is not zero in general).
However, note that on frequent occasions we will suppress the argument $\cW$ from $d_{\cW}F$, since it will be apparent from the subscript $\cW$ anyway.
\item[(iii)] For the more general speed function $f=\mrm{sgn}(p)\p(s)\psi(\nu)F^{p}$ we will write
\eq{\zf:=\mrm{sgn}(p)\p(s)\psi(\nu)\cF^{p}}
and
\eq{f'=|p|\p(s)\psi(\nu)F^{p-1}F',}
i.e., there holds
\eq{df(\xi,\cW)(v,B)=d_{\cW}f(\xi,\cW)(B)&=|p|\p(s)\psi(\nu)F^{p-1}\tr(F'(\xi,\cW)\circ B)\\
                    &=\tr(f'(\xi,\cW)\circ B).}
\end{itemize}
}}

\section{Evolution equations}
We begin by collecting some basic evolution equations. The final aim is to deduce the evolution equation for the function
\begin{align*}u:=\fr{\dot{f}}{f}\end{align*}
under the flow \eqref{Flow},
\eq{\label{Flow2}\dot{x}=-\s f\nu-x_{*}V,}
where $f$ satisfies \cref{SpeedAss} and
\eq{\label{EvEq-1}V:=\grad_{h}f}
is the spatial gradient of $f$ with respect to the second fundamental form:
\eq{h(V,X)=Xf\quad\forall X\in  T M.}
Note that
\eq{\label{EvEq-2}\-\n_{X}\dot{x}&=-\s\-\n_{X}\br{f\nu}-\-\n_{X}V\\
                &=-\s h(V,X) \nu-\s f\-\n_{X}\nu-x_{*}\n_{X}V+\s h(X,V)\nu}
is tangential and hence we may define an endomorphism $A\in T^{1,1}(M)$ by
\eq{\label{A} x_{*}(A(X))=-\-\n_{X}\dot{x}.}
Since we are dealing with strictly convex hypersurfaces, the tensor
\begin{align*}\~g:=\fr{h}{f}\end{align*}
defines a symmetric non-degenerate bilinear form.
We also define a bilinear form associated with $A$:
\begin{align*}B(X,Y):=\~g(X,A(Y)).\end{align*} Note that
\begin{align*}V=\grad_{\~g}\ln |f|.\end{align*}
Also let us define
\begin{align*}\L(X,Y):=\overline{\mrm{Rm}}(\dot{x},x_{\ast}X,\nu,x_{\ast}Y).\end{align*}
Note that $B$ and $\L$ are generally \emph{not} symmetric.
\Theo{lemma}{SymmetriesB}{
There holds
\begin{align*}B(X,Y)&=B(Y,X)+\fr{1}{f}\overline{\mrm{Rm}}(\dot{x},\nu,X,Y)\\
        &=B(Y,X)+\fr 1f \L(X,Y)-\fr 1f\L(Y,X).\end{align*}
}
\pf{
For $X,Y\in  T M,$ due to the Weingarten equation \eqref{Weingarten} and \eqref{EvEq-2},
\begin{align*}fB(X,Y)=h(X,A(Y))&=\-g(\-\n_{X}\nu,A(Y))\\
                        &=\s f\-g(\-\n_{Y}\nu,\-\n_{X}\nu)+\-g(\-\n_{X}\nu,\n_{Y}V)\\
                    &=\s f\-g(\-\n_{X}\nu,\-\n_{Y}\nu)+h(X,\n_{Y}V).\end{align*}
Moreover, we use the Codazzi equation \eqref{Codazzi} to obtain
\begin{align*}h(X,\n_{Y}V)&=YXf-h(\n_{Y}X,V)-\n h(V,X,Y)\\
            &=XYf-h(\n_{X}Y,V)-\n h(V,Y,X)+\overline{\mrm{Rm}}(\nu,V,X,Y)\\
            &=h(Y,\n_{X}V)+\overline{\mrm{Rm}}(\nu,V,X,Y).\end{align*}
Hence the claim follows from the first Bianchi identity.
}
\subsection*{Basic evolution equations}
\Theo{lemma}{BasicEv}{
Along the flow \eqref{Flow} there hold
\eq{\label{BE-g}\dot{g}=-2A^{\flat}_{\mrm{sym}},}
\eq{\label{BE-n}\fr{\-\n}{dt}\nu =0,}
\eq{\label{BE-W}\dot{\mc{W}}=A\circ \mc{W}+\L^{\sharp},}
\eq{\label{BE-h}\dot{h}=-fB+\L,}
\eq{\label{BE-tildeg}\dot{\~g}=-B-\~gu+\fr{\L}{f},}
\eq{\label{BE-gradf}\dot{V}&=\grad_{\tilde{g}}u+A(V)+uV-\fr{1}{f}(\L^{t})^{\~\sharp}V,}
\eq{\label{BE-speed} \fr{\-\n}{dt}\dot{x}=u\dot{x}-x_{\ast}(\grad_{\tilde{g}}u)+\fr 1f x_{\ast}((\L^{t})^{\~\sharp}V ).}
}
\pf{Let $X,Y\in T M.$\newline
``\eqref{BE-g}'': By \eqref{EvEq-2} we have
\eq{\del_{t}\br{x^{*}\-g}(X,Y)&=\del_{t}\br{\-g(x_{*}X,x_{*}Y)}\\
                    &=\-g(\-\n_{X}\dot{x},x_{*}Y)+\-g(x_{*}X,\-\n_{Y}\dot{x})\\
                    &=-\-g(x_{*}(A(X)),x_{*}Y)-\-g(x_{*}(A(Y)),x_{*}X)\\
                    &=-g(A(X),Y)-g(X,A(Y)).}
``\eqref{BE-n}'': We have $0=\del_{t}\bar{g}(\nu,\nu)$. Since $0=\del_{t}\bar{g}(\nu,x_{*}X)$, we get
\eq{\bar{g}(\-\n_{\dot{x}} \nu,X)=-\bar{g}(\nu,\-\n_{\dot{x}}X)=-\bar{g}(\nu,\-\n_{X}\dot{x})=0.}
``\eqref{BE-W}'': Recall that \eqref{Weingarten} implies
\eq{\-\n_{x_{*}X}\nu=x_{*}\mc{W}(X).} Taking $\-\n_{\dot{x}}$, using (\ref{BE-n}) and \eqref{Riem} we calculate
\eq{\overline{\mrm{Rm}}(\dot{x},X)\nu=\-\n_{x_{*}\mc{W}(X)}\dot{x}+[\dot{x},x_{*}\mc{W}(X)]=-x_{*}(A(\mc{W}(X)))+x_{*}\dot{\mc{W}}(X).}
``\eqref{BE-h}'': Differentiate the Weingarten \eqref{Weingarten} equation to obtain
\begin{align*}\del_{t}h(X,Y)=&\-g\br{\-\n_{\dot{x}}\-\n_{X}\nu,Y}+\-g\br{\-\n_{X}\nu,\-\n_{\dot{x}}Y}\\            =&\overline{\mrm{Rm}}\br{\dot{x},X,\nu,Y}-h(X,A(Y))\\
=&\L(X,Y)-fB(X,Y).
\end{align*}
``\eqref{BE-tildeg}'': It follows directly from \eqref{BE-h}.\newline
``\eqref{BE-gradf}'':
\begin{align*}Xu&=X\del_{t}\ln|f|\\    &=\del_{t}\br{\~g(X,\grad_{\tilde{g}}\ln|f|)}=\dot{\~g}(X,V)+\~g(X,\dot{V})\\
    &=-B(X,V)+\fr{1}{f}\L(X,V)-\~g(X,V)u+\~g(X,\dot{V})\\
    &=-\~g(X,A(V))+\fr{1}{f}\L(X,V)-\~g(X,V)u+\~g(X,\dot{V}).
\end{align*}
``\eqref{BE-speed}'':
\begin{align*}\fr{\-\n}{dt}\dot{x}=&-\s\dot{f}\nu-\fr{\-\n}{dt}(x_{\ast}V)\\
                =&-\s\dot{f}\nu-\-\n_{x_{\ast}V}\dot{x}-[\dot{x},x_{\ast}V]\\
                =&-\s\dot{f}\nu+x_{\ast}(A(V))-x_{\ast}\dot{V}\\
                =&-\s uf\nu+x_{\ast}(A(V))-x_{\ast}(\grad_{\tilde{g}}u)-x_{\ast}(A(V))-ux_{\ast}V+x_{\ast}(\fr{1}{f}(\L^{t})^{\~\sharp}V)\\
                =&u\dot{x}-x_{\ast}(\grad_{\tilde{g}}u)+\fr 1f x_{\ast}((\L^{t})^{\~\sharp}V)
.\end{align*}}
\subsection*{Evolution equations involving the affine connection}
From now on, to simplify the calculations, we will work with the {\it{affine connection}} $\~\n$ induced by the transversal vector field $\dot{x}.$

For $X,Y\in T M$ we have a decomposition given by\footnote{In fact, $\hat\n_{X}Y=\n_{X}Y+\~g(X,Y)V$ and hence $\hat{\n}$ is torsion free.}
\begin{align*}\-\n_{X}Y=x_{\ast}(\hat\n_{X}Y)+\~g(X,Y)\dot{x}.\end{align*}
However, $\hat{\n}$ is not the Levi-Civita connection for the so-called {\it{affine fundamental form}} $\~g.$ Let $\~{\n}$ denote the Levi-Civita connection of $\~g$ and define the difference tensor $D$ of type $(1,2)$ by
\begin{align*}D_{X}Y:=\hat{\n}_{X}Y-\~{\n}_{X}Y.\end{align*}
Since both $\tilde{\nabla}$ and $\hat{\nabla}$ are torsion free, we have $D_XY=D_YX.$ See \cite{NomizuSasaki:/1994} for an introduction to affine geometry.
\Theo{lemma}{AE-A}{
\eq{\label{AE-A-1} \dot{A}=A^{2}+uA+\~{\n}\grad_{\tilde{g}}u+D \grad_{\tilde{g}}u-\n \br{\fr 1f (\L^{t})^{\~\sharp}V}+\br{\overline{\mrm{Rm}}(\cdot,\dot{x})\dot{x}}^{\top}. }
}
\pf{Let $X,Y\in  T M.$ Differentiate \eqref{A} with respect to $\dot{x}$ to obtain
\begin{align*}\-\n_{\dot{x}}x_{\ast}(A(X))=-\-\n_{\dot{x}}\-\n_{X}\dot{x},\end{align*}
\begin{align*}[\dot{x},x_{\ast}(A(X))]+\-\n_{x_{\ast}(A(X))}\dot{x}=-\-\n_{X}\-\n_{\dot{x}}\dot{x}+\overline{\mrm{Rm}}(X,\dot{x})\dot{x}.\end{align*}
Thus using (\ref{BE-speed}) we get
\begin{align*}&x_{\ast}(\dot{A}(X))-x_{\ast}(A^{2}(X))-\overline{\mrm{Rm}}(X,\dot{x})\dot{x}\\
=&-\-\n_{X}\br{u\dot{x}-x_{\ast}(\grad_{\tilde{g}}u)+\fr 1f x_{\ast}((\L^{t})^{\~\sharp}V)}\\
            =&-\br{\-\n_{X}u}\dot{x}+ux_{\ast}(A(X))+x_{\ast}(\hat{\n}_{X}\grad_{\tilde{g}}u)+\~g(X,\grad_{\tilde{g}}u)\dot{x}\\
            &-\-\n_{X}\br{\fr 1f x_{\ast}((\L^{t})^{\~\sharp}V)}\\
            =&ux_{\ast}(A(X))+x_{\ast}(\~{\n}_{X}\grad_{\tilde{g}}u)+x_{\ast}(D_{X}\grad_{\tilde{g}}u)-\-\n_{X}\br{ x_{\ast}\br{\fr 1f(\L^{t})^{\~\sharp}V}}\\
            =&ux_{\ast}(A(X))+x_{\ast}(\~{\n}_{X}\grad_{\tilde{g}}u)+x_{\ast}(D_{X}\grad_{\tilde{g}}u)-x_{\ast}(\n_{X}\br{\fr 1f(\L^{t})^{\~\sharp}V})\\
            &+\s h(X,\fr 1f(\L^{t})^{\~\sharp}V)\nu.\end{align*}
        }
\Theo{lemma}{Ev-Lambda}{
\begin{align*}g(\partial_t\br{\fr{\L^{\sharp}}{f}}(X),Y)=&\fr 1f g(A(\L^{\sharp}(X)),Y)-\fr 1f \L(A(X),Y)\\
        &-\fr 1f \overline{\mrm{Rm}}(\grad_{\tilde{g}}u, X,\nu, Y)+\fr{1}{f^{2}}\overline{\mrm{Rm}}(\br{\L^{t}}^{\~\sharp}V,X,\nu,Y)\\
        &+\fr{1}{f}\-\n_{\dot{x}}\overline{\mrm{Rm}}(\dot{x},X,\nu,Y).\end{align*}
}
\pf{Differentiating the defining equation
\begin{align*}g(\fr{\L^{\sharp}}{f}(X),Y)=\fr 1f\L(X,Y)=\fr 1f\overline{\mrm{Rm}}(\dot{x},x_{\ast}X,\nu,x_{\ast}Y)\end{align*}
 with respect to $\dot{x}$ and using
\begin{align*}2A^{\flat}_{\mrm{sym}}(\L^{\sharp}(X),Y)&=g(A(\L^{\sharp}(X)),Y)+g(A(Y),\L^{\sharp}(X))\end{align*}
as well as \eqref{A}, \eqref{BE-g} and \eqref{BE-speed} yield the result.
}
\Theo{lemma}{Lambdasharptilde}{
\begin{align*}
g(\n_{Z}\br{\fr{(\L^{t})^{\~\sharp}V}{f}},Y)
=&-\n_{Z}\overline{\mrm{Rm}}(\dot{x},\cW^{-1}(Y),\nu,\dot{x})+\overline{\mrm{Rm}}(\dot{x},\cW^{-1}(Y),\dot{x},\cW(Z))\\
    &+\overline{\mrm{Rm}}(\dot{x},\cW^{-1}\br{(\n h(Z,\cW^{-1}(Y),\cdot))^{\sharp}},\nu,\dot{x})\\
    &+\overline{\mrm{Rm}}(\dot{x},\cW^{-1}((\overline{\mrm{Rm}}(\nu,\cW^{-1}(Y))Z)^{\top}),\nu,\dot{x})\\
    &+\s h(Z,\cW^{-1}(Y))\overline{\mrm{Rm}}(\dot{x},\nu,\nu,\dot{x})\\
    &-\L(A(Z),\cW^{-1}(Y))+2\L(\cW^{-1}(Y),A(Z)).
\end{align*}
}
\pf{Covariant differentiating the equation
\eq{\label{Lambdasharptilde-2}g(\fr{(\L^{t})^{\~\sharp}V}{f},Y)=\fr{h}{f}((\L^{t})^{\~\sharp}V,\cW^{-1}(Y))
=\L(\cW^{-1}(Y),V)=-\overline{\mrm{Rm}}(\dot{x},\cW^{-1}(Y),\nu,\dot{x})}
with respect to $Z$ gives
\eq{\nonumber\label{Lambdasharptilde-1}g(\n_{Z}\br{\fr{(\L^{t})^{\~\sharp}V}{f}},Y)
=&-g(\fr{(\L^{t})^{\~\sharp}V}{f},\n_{Z}Y)-\-\n_{Z}\overline{\mrm{Rm}}(\dot{x},\cW^{-1}(Y),\nu,\dot{x})\\
    &+\overline{\mrm{Rm}}(A(Z),\cW^{-1}(Y),\nu,\dot{x})-\overline{\mrm{Rm}}(\dot{x},\-\n_{Z}(\cW^{-1}(Y)),\nu,\dot{x})\\
            &-\overline{\mrm{Rm}}(\dot{x},\cW^{-1}(Y),\cW(Z),\dot{x})+\overline{\mrm{Rm}}(\dot{x},\cW^{-1}(Y),\nu,A(Z)).}
Moreover, by the Gaussian formula \eqref{Gauss},
\begin{align*}
\-\n_{Z}(\cW^{-1}(Y))=\n_{Z}(\cW^{-1}(Y))-\s h(Z,\cW^{-1}(Y))\nu.
\end{align*}
Putting this last relation as well as \eqref{Flow2} into \eqref{Lambdasharptilde-1} gives
\begin{align*}
g(\n_{Z}\br{\fr{(\L^{t})^{\~\sharp}V}{f}},Y)=&-g(\fr{(\L^{t})^{\~\sharp}V}{f},\n_{Z}Y)-\-\n_{Z}\overline{\mrm{Rm}}(\dot{x},\cW^{-1}(Y),\nu,\dot{x})\\
    &-\overline{\mrm{Rm}}(\dot{x},\n_{Z}(\cW^{-1}(Y)),\nu,\dot{x})+\s h(Z,\cW^{-1}(Y))\overline{\mrm{Rm}}(\dot{x},\nu,\nu,\dot{x})\\
        &+2\L(\cW^{-1}(Y),A(Z))-\L(A(Z),\cW^{-1}(Y))\\
    &+\overline{\mrm{Rm}}(\dot{x},\cW^{-1}(Y),\dot{x},\cW(Z)).
\end{align*}
To turn the fourth term on the right-hand side of this last identity to a tensorial term, we use the Codazzi equation
 \footnote{$g(\n_X \mc{W}(\mc{W}^{-1}(Y)),Z)=\n h(\mc{W}^{-1}(Y),Z,X)=\n h(\mc{W}^{-1}(Y),X,Z)+\overline{\mrm{Rm}}(\nu,\mc{W}^{-1}(Y),X,Z);$ therefore,
$\n_Z \mc{W}(\mc{W}^{-1}(Y))=(\n h(\mc{W}^{-1}(Y),Z,\cdot))^{\sharp}+(\overline{\mrm{Rm}}(\nu,\cW^{-1}(Y))Z)^{\top}.$}
\eq{\n_{Z}(\cW^{-1}(Y))=&-\cW^{-1}(\n_{Z}\cW(\cW^{-1}(Y)))+\cW^{-1}(\n_{Z}Y)\\
=&-\cW^{-1}\br{(\n h(Z,\cW^{-1}(Y),\cdot))^{\sharp}}+\cW^{-1}(\n_{Z}Y)\\
&-\cW^{-1}((\overline{\mrm{Rm}}(\nu,\cW^{-1}(Y))Z)^{\top})}
and
\eq{-g(\fr{(\L^{t})^{\~\sharp}V}{f},\n_{Z}Y)=\overline{\mrm{Rm}}(\dot{x},\cW^{-1}(\n_{Z}Y),\nu,\dot{x}).}
Therefore we arrive at
\begin{align*}
g(\n_{Z}\br{\fr{(\L^{t})^{\~\sharp}V}{f}},Y)
=&-\n_{Z}\overline{\mrm{Rm}}(\dot{x},\cW^{-1}(Y),\nu,\dot{x})\\
    &+\overline{\mrm{Rm}}(\dot{x},\cW^{-1}\br{(\n h(Z,\cW^{-1}(Y),\cdot))^{\sharp}},\nu,\dot{x})\\
    &+\overline{\mrm{Rm}}(\dot{x},\cW^{-1}((\overline{\mrm{Rm}}(\nu,\cW^{-1}(Y))Z)^{\top}),\nu,\dot{x})\\
    &+\s h(Z,\cW^{-1}(Y))\overline{\mrm{Rm}}(\dot{x},\nu,\nu,\dot{x})\\
    &-\L(A(Z),\cW^{-1}(Y))+2\L(\cW^{-1}(Y),A(Z))\\
        &+\overline{\mrm{Rm}}(\dot{x},\cW^{-1}(Y),\dot{x},\cW(Z)).
\end{align*}
}
We need one more lemma before calculating the main the evolution equation.
\Theo{lemma}{MixedTrace}{
\begin{align*}
\fr{1}{f}\operatorname{Tr}( f'\circ\cW\circ A^{2})&=\fr 1fd_{h}\zf\br{\L(\cdot,A(\cdot))}-\fr 1f d_{h}\zf\br{\L(A(\cdot),\cdot)}\\
&\hp{=}+\fr 1f d_{h}\zf\br{h(A(\cdot),A(\cdot))}.
\end{align*}
}
\pf{The claim follows from Proposition \ref{BilF} and Lemma \ref{SymmetriesB}:
\begin{align*}
\operatorname{Tr}( f'\circ\cW\circ A^{2})&=\operatorname{Tr}( f'\circ (h(\cdot,A^{2}(\cdot)))_{\mrm{sym}}^{\sharp_g})\\
&=d_{h}\zf\br{h(\cdot,A^{2}(\cdot))}\\
&=fd_{h}\zf\br{B(\cdot,A(\cdot))}\\        &=fd_{h}\zf\br{B(A(\cdot),\cdot)}+d_{h}\zf\br{\L(\cdot,A(\cdot))}-d_{h}\zf\br{\L(A(\cdot),\cdot)}\\                    &=d_{h}\zf\br{h(A(\cdot),A(\cdot))}+d_{h}\zf\br{\L(\cdot,A(\cdot))}-d_{h}\zf\br{\L(A(\cdot),\cdot)}.
\end{align*}
}

\Theo{lemma}{Ev-u-new}{Under the flow \eqref{Flow} we have
\eq{\label{Ev-u}\mc{L}u:=&\dot{u}-d_{h}\zf(\tilde{\nabla}^2u)-d_{h}\zf\br{\~g(D_{(\cdot)}\grad_{\~g}u,\cdot)}+\fr 1f d_{h}\zf\br{\overline{\mrm{Rm}}(\grad_{\tilde{g}}u,\cdot,\nu,\cdot)}\\
=&(\ln\p)'' \dot{s}^{2}+(\ln\p)'\ddot{s}+\frac{\dot{s}}{f}(\ln\p)' d_{\cW}f(\dot{\cW})+\fr 1f  d^2_{\cW}f(\dot{\cW},\dot{\cW})\\
    \hp{=}&+\fr 2f d_{h}\zf\br{h(A(\cdot),A(\cdot))}+\fr 2f \operatorname{Tr}( f'\circ (A\circ\L^{\sharp}-\L^{\sharp}\circ A))\\
    \hp{=}&+\s\br{1-\fr{d_{h}\zf\br{h(\cdot,\cdot)}}{f}}\overline{\mrm{Rm}}(\dot{x},\nu,\nu,\dot{x})+\fr 2f d_{h}\zf\br{\overline{\mrm{Rm}}(\cdot,\dot{x},\dot{x},\cW(\cdot))}\\
&+\fr 1fd_{h}\zf\br{\-\n\overline{\mrm{Rm}}(\dot{x},\cdot,\nu,\cdot,\dot{x})+\-\n\overline{\mrm{Rm}}(\dot{x},\cdot,\nu,\dot{x},\cdot)}.}
}
\pf{Note that
\begin{align*}
u=\fr{\dot{f}}{f}=(\ln \p)'\dot{s}+\fr 1f d_{\cW}f(\dot{\cW})=(\ln\p)'\dot{s}+\fr 1f  d_{\cW}f(A\circ\cW+\L^{\sharp}).
\end{align*}
Hence taking the time derivative,
\begin{align*}
\dot{u}=&(\ln\p)''\dot{s}^{2}+(\ln\p)'\ddot{s}+\fr 1f(\ln\p)'\dot{s} d_{\cW}f(\dot{\cW})\\
        &-\fr{u}{f} d_{\cW}f(A\circ\cW+\L^{\sharp})+\fr 1f  d_{\cW}^2f(\dot{\cW},\dot{\cW})+\fr 1f  d_{\cW}f(\dot{A}\circ\cW+A\circ\dot{\cW}+\del_{t}\L^{\sharp})\\
        =&(\ln\p)''\dot{s}^{2}+(\ln\p)'\ddot{s}+\fr 1f(\ln\p)'\dot{s} d_{\cW}f(\dot{\cW})+\fr 1f  d_{\cW}^2f(\dot{\cW},\dot{\cW})\\
        &+\fr 1f  d_{\cW}f(\dot{A}\circ\cW+A^{2}\circ\cW+A\circ\L^{\sharp}-uA\circ\cW)+ d_{\cW}f\br{\del_{t}\br{\fr{{\L^{\sharp}}}{f}}}.
\end{align*}
Since
$d_{\cW}f(T)=\operatorname{Tr}( f'\circ T)$
and $f'$ commutes with $\cW$, \cref{AE-A} implies that
\begin{align*}
\dot{u}=&(\ln\p)''\dot{s}^{2}+(\ln\p)'\ddot{s}+\fr 1f(\ln\p)'\dot{s}  d_{\cW}f(\dot{\cW})+\fr 1f   d_{\cW}^2f(\dot{\cW},\dot{\cW})\\
    &+\fr 1f\operatorname{Tr}( f'\circ\cW\circ(\dot{A}+A^{2}-uA))+ d_{\cW}f\br{\fr{A\circ\L^{\sharp}}{f}+\del_{t}\br{\fr{{\L^{\sharp}}}{f} }}\\
        =&(\ln\p)''\dot{s}^{2}+(\ln\p)'\ddot{s}+\fr 1f(\ln\p)'\dot{s}  d_{\cW}f(\dot{\cW})+\fr 1f   d_{\cW}^2f(\dot{\cW},\dot{\cW})\\
    &+\fr{2}{f}\operatorname{Tr}( f'\circ\cW\circ A^{2})+\fr 1f\operatorname{Tr}\br{ f'\circ\cW\circ\br{\~{\n}\grad_{\tilde{g}}u+D \grad_{\tilde{g}}u}}\\
    &-\fr 1f \operatorname{Tr}\br{ f'\circ\cW\circ\n\br{\fr 1f (\L^{t})^{\~\sharp}V}}+ d_{\cW}f\br{\fr{A\circ\L^{\sharp}}{f}+\del_{t}\br{\fr{{\L^{\sharp}}}{f} }}\\
    &+\fr 1f \operatorname{Tr}\br{ f'\circ\cW\circ\br{\overline{\mrm{Rm}}(\cdot,\dot{x})\dot{x}}^{\top}}.
\end{align*}
Rewriting the $\grad_{\tilde{g}}$-terms we obtain
\eq{\label{Ev-u-5-new}\dot{u}&-d_{h}\zf\br{\~{\n}^{2}u}-d_{h}\zf\br{\~g(D_{(\cdot)}\grad_{\~g}u,\cdot)}\\
    =&(\ln\p)''\dot{s}^{2}+(\ln\p)'\ddot{s}+\fr 1f(\ln\p)'\dot{s}  d_{\cW}f(\dot{\cW})+\fr 1f   d_{\cW}^2f(\dot{\cW},\dot{\cW})\\
    &+\fr 1f \operatorname{Tr}\br{ f'\circ\cW\circ\br{\overline{\mrm{Rm}}(\cdot,\dot{x})\dot{x}}^{\top}}+\fr{2}{f}\operatorname{Tr}( f'\circ\cW\circ A^{2})\\
    &-\fr 1f \operatorname{Tr}\br{ f'\circ\cW\circ\n\br{\fr 1f (\L^{t})^{\~\sharp}V}}+ d_{\cW}f\br{\fr{A\circ\L^{\sharp}}{f}+\del_{t}\br{\fr{{\L^{\sharp}}}{f} }}.}
Using the formulas from \cref{Ev-Lambda}, \cref{Lambdasharptilde} and \cref{MixedTrace}, we treat the last three terms of (\ref{Ev-u-5-new}) in order.
\item[(i)] From \cref{MixedTrace} we obtain
\eq{\label{Ev-u-4-new}\fr{2}{f}\operatorname{Tr}( f'\circ\cW\circ A^{2})=&\fr 2f d_{h}\zf\br{\L(\cdot,A(\cdot))}-\fr 2f\operatorname{Tr} (f'\circ\L^{\sharp}\circ A)+\fr 2f d_{h}\zf\br{h(A(\cdot),A(\cdot))},}
where we used Proposition \ref{BilF} to obtain
\begin{align*}
d_{h}\zf\br{\L(A(\cdot),\cdot)}=d_h\zf\br{g(\L^{\sharp}\circ A(\cdot),\cdot)}=\operatorname{Tr} (f'\circ\L^{\sharp}\circ A).
\end{align*}
\item[(ii)]
\cref{Lambdasharptilde} implies that\footnote{In an orthonormal frame $\{e_i\}$ we have
\begin{align*}
\sum_{i,j}\overline{\mrm{Rm}}(\dot{x},\cW^{-1}((\nabla h(e_i,f'(e_j),\cdot))^{\sharp}),\nu,\dot{x})&=\overline{\mrm{Rm}}(\dot{x},V,\nu,\dot{x})=-f\s\overline{\mrm{Rm}}(\dot{x},\nu,\nu,\dot{x}).
\end{align*}}
\eq{\label{Ev-u-2-new}&-\fr 1f \operatorname{Tr}\br{ f'\circ\cW\circ\n\br{\fr 1f (\L^{t})^{\~\sharp}V}}\\
 &=\fr{1}{f}d_{h}\zf(\nabla\overline{\mrm{Rm}}(\dot{x},\cdot,\nu,\dot{x},\cdot))-\fr{1}{f}d_{h}\zf(\overline{\mrm{Rm}}(\dot{x},\cdot,\dot{x},\cW(\cdot)))\\
    &\hp{=}+\s\overline{\mrm{Rm}}(\dot{x},\nu,\nu,\dot{x})
    +\frac{1}{f}d_{h}\zf(\Lambda(\cW^{-1}((\overline{\mrm{Rm}}(\nu,\cdot)(\cdot))^{\top}),V))\\
&\hp{=}-\frac{\s}{f}d_{h}\zf(h(\cdot,\cdot))\overline{\mrm{Rm}}(\dot{x},\nu,\nu,\dot{x})
+\frac{1}{f}d_{h}\zf(\L(A(\cdot),\cdot))-\frac{2}{f}d_{h}\zf(\L(\cdot,A(\cdot)))
.}
\item[(iii)] In view of \cref{Ev-Lambda} we have
\eq{\label{Ev-u-1-new}& d_{\cW}f\br{\fr{A\circ\L^{\sharp}}{f}+\del_{t}\br{\fr{{\L^{\sharp}}}{f} }}\\ =&\fr{2}{f}\operatorname{Tr} ( f'\circ A\circ{\L^{\sharp}})-\fr{1}{f}\operatorname{Tr}( f'\circ\L^{\sharp}\circ A)-\fr 1fd_{h}\zf\br{\overline{\mrm{Rm}}(\grad_{\tilde{g}}u,\cdot,\nu,\cdot)}\\
\hp{=}&+\fr{1}{f^{2}}d_{h}\zf\br{\overline{\mrm{Rm}}((\L^{t})^{\~\sharp}V,\cdot,\nu,\cdot)}+\fr 1f d_{h}\zf\br{\-\n\overline{\mrm{Rm}}(\dot{x},\cdot,\nu,\cdot,\dot{x})}.}
Also, note that $(\L^{t})^{\~\sharp}=f\mc{W}^{-1}\circ(\L^{t})^{\sharp}$; therefore,
\begin{align*}
\fr{1}{f^{2}}d_{h}\zf\br{\overline{\mrm{Rm}}((\L^{t})^{\~\sharp}V,\cdot,\nu,\cdot)}&=-\fr 1f d_{h}\zf\br{\overline{\mrm{Rm}}\br{\nu,\cdot,\cdot,\fr{(\L^{t})^{\~\sharp}V}{f}}}\\
        &=-\fr 1f d_{h}\zf \br{g(\fr{(\L^{t})^{\~\sharp}V}{f},(\overline{\mrm{Rm}}(\nu,\cdot)(\cdot))^{\top})}\\
        &=-\fr 1f d_{h}\zf\br{g(\mc{W}^{-1}\circ(\L^{t})^{\sharp}(V),(\overline{\mrm{Rm}}(\nu,\cdot)(\cdot))^{\top})}\\
        &=-\fr 1f d_{h}\zf\br{\L(\cW^{-1}((\overline{\mrm{Rm}}(\nu,\cdot)(\cdot))^{\top}),V)}.
\end{align*}
Putting these last three items all together gives
\eq{\label{Ev-u-6-new}&\fr{2}{f}\operatorname{Tr}( f'\circ\cW\circ A^{2})-\fr 1f \operatorname{Tr}( f'\circ\cW\circ\n\br{\fr 1f (\L^{t})^{\~\sharp}V})+d_{\cW}f(\fr{A\circ\L^{\sharp}}{f}+\del_{t}\br{\fr{{\L^{\sharp}}}{f} })\\
    =~& \fr 2f \operatorname{Tr}( f'\circ (A\circ \L^{\sharp}-\L^{\sharp}\circ A))-\fr 1f d_{h}\zf\br{\overline{\mrm{Rm}}(\grad_{\tilde{g}}u,\cdot,\nu,\cdot)}\\
    +&~\fr 1f d_{h}\zf\br{\-\n\overline{\mrm{Rm}}(\dot{x},\cdot,\nu,\cdot,\dot{x})}+\s\br{1-\fr{d_{h}\zf \br{h(\cdot,\cdot)}}{f}}\overline{\mrm{Rm}}(\dot{x},\nu,\nu,\dot{x})\\
    -&~\fr{1}{f}d_{h}\zf\br{\overline{\mrm{Rm}}(\dot{x},\cdot,\dot{x},\cW(\cdot))}+\fr 1f d_{h}\zf\br{\-\n\overline{\mrm{Rm}}(\dot{x},\cdot,\nu,\dot{x},\cdot)}+\fr 2f d_{h}\zf\br{h(A(\cdot),A(\cdot))}.}
Putting \eqref{Ev-u-6-new} into \eqref{Ev-u-5-new} gives the claimed result.
}
\section{Gauss Map and Duality}\label{Duality}
\label{gauss_duality}
In what follows, a semicolon denotes covariant derivatives with respect to the induced metric.
In this section, we give a brief review of a duality relation between strictly convex hypersurfaces of the unit sphere $\S^{n+1}$ and a duality relation between strictly convex hypersurfaces of the hyperbolic space with such of the de Sitter space. The relevant results can be found in \cite[Ch.~9, 10]{Gerhardt:/2006}. For convenience, we will state the main results here and stick to the notation in \cite{Gerhardt:/2006}.
\subsection*{Duality in the sphere} In this section, $\langle \cdot,\cdot\rangle$ denotes the inner product in $\mathbb{R}^{n+2}.$
Let $x\cn M_0\ra M\hra \S^{n+1}$ be a strictly convex closed hypersurface. Let the {\it{Gauss map}} $\~x\in T_x(\R^{n+2})$ represent the unit normal vector to $M$, $\nu\in T_x(\S^{n+1}).$ Then the mapping
\eq{\label{GaussMap}\~x\cn M_0\ra \S^{n+1}}
is also the embedding of a closed and strictly convex hypersurface. The geometry of $\~x$ is governed by the following theorem:
\Theo{thm}{SphereDuality}{\cite[Thm.~9.2.5]{Gerhardt:/2006}
Let $x\cn M_0\ra M\ra \S^{n+1}$ be a closed, connected, strictly convex hypersurface of class $C^{m},$ $m\geq 3,$ then the Gauss map $\~x$ in \eqref{GaussMap} is the embedding of a closed, connected, strictly convex hypersurface $\~M\sub \mathbb{S}^{n+1}$ of class $C^{m-1}.$ Viewing $\~M$ as a codimension $2$ submanifold in $\R^{n+2},$ its Gaussian formula is
\eq{\~x_{;ij}=-\~g_{ij}\~x-\~h_{ij}x,}
where $\~g_{ij},$ $\~h_{ij}$ are the metric and the second fundamental form of the hypersurface $\~M\sub \mathbb{S}^{n+1}$ and $x=x(\xi)$ is the embedding of $M$ which also represents the exterior normal vector of $\~M$. The second fundamental form $\~h_{ij}$ is defined with respect to the interior normal vector.

The second fundamental forms of $M,$ $\~M$ and the corresponding principal curvatures $\k_{i},$ $\~\k_{i}$ satisfy
\begin{align*}
h_{ij}=\~h_{ij}=\ip{\~x_{;i}}{x_{;j}},\quad \~\k_{i}=\k_{i}^{-1}.
\end{align*}
}
We point out that $\tilde{M}$ is called the polar set to $M$ and it has the following elegant representation:
$$\tilde{M}=\{y\in \mathbb{S}^{n+1}: \sup_{x\in M}\langle x,y\rangle=0\};$$
see \cite[Thm.~9.2.9]{Gerhardt:/2006}.

The following illustration shall give a clearer picture of the duality:

\includegraphics[width=0.95\textwidth]{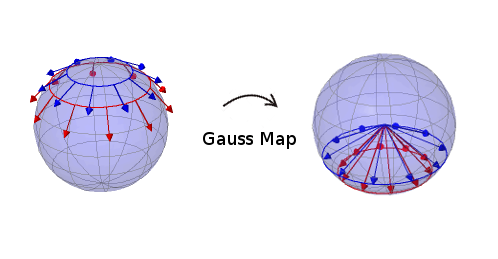}

\subsection*{Duality between hyperbolic space and de Sitter space} In this section, $\langle \cdot,\cdot\rangle$ denotes the inner product of $\mathbb{R}^{n+1,1}.$
The de Sitter space is the Lorentzian spaceform in the Minkowski space with constant sectional curvature $K_N=1:$
\begin{align*}\mathbb{S}^{n,1}=\{z\in\mathbb{R}^{n+1,1}: \langle z,z\rangle=1 \},\end{align*}
whereas the hyperbolic space is a Riemannian spaceform in the Minkowski space with constant sectional curvature $K_N=-1$:
\begin{align*}\mathbb{H}^{n+1}=\{z\in\mathbb{R}^{n+1,1}: \langle z,z\rangle=-1,~ z^{0}>0\},\end{align*}
where $z^0$ is the time coordinate.

Similarly, as for the sphere, given an embedding
$
x\cn M_0\ra M\sub\H^{n+1}
$
of a closed and strictly convex hypersurface, the representation $\~x\in T_x(\R^{n+1,1})$ of the exterior normal vector $\nu\in T_x(\H^{n+1})$ yields the embedding
\eq{\label{HypGaussMap}\~x\cn M_0\ra \~M\sub \S^{n,1}}
of a strictly convex, closed and spacelike hypersurface $\~M.$
We also call $\~x$ the {\it{Gauss map of $M$}} and similar to the spherical case we have the following theorem:
\Theo{thm}{DeSitterDuality}{\cite[Thm.~10.4.4]{Gerhardt:/2006}
Let $x\cn M\ra \H^{n+1}$ be a closed, connected, strictly convex hypersurface of class $C^{m},$ $m\geq 3,$ then the Gauss map $\~x$ as in \eqref{HypGaussMap} is the embedding of a closed, spacelike, achronal \footnote{This property is irrelevant for us.}, strictly convex hypersurface $\~M\sub \mathbb{S}^{n,1}$ of class $C^{m-1}.$ Viewing $\~M$ as a codimension $2$ submanifold in $\R^{n+1,1},$ its Gaussian formula is
\begin{align*}
\~x_{;ij}=-\~g_{ij}\~x+\~h_{ij}x,
\end{align*}
where $\~g_{ij},$ $\~h_{ij}$ are the metric and the second fundamental form of the hypersurface $\~M\sub \mathbb{S}^{n,1}$ and $x=x(\xi)$ is the embedding of $M$ which also represents the future directed normal vector of $\~M$. The second fundamental form $\~h_{ij}$ is defined with respect to the future directed normal vector, where the time orientation of $N$ is inherited from $\R^{n+1,1}$.

The second fundamental forms of $M,$ $\~M$ and the corresponding principal curvatures $\k_{i},$ $\~\k_{i}$ satisfy
\begin{align*}
h_{ij}=\~h_{ij}=\ip{\~x_{;i}}{x_{;j}},\quad \~\k_{i}=\k_{i}^{-1}.
\end{align*}
}
The hypersurface $\tilde{M}$ is called the polar set to $M$ and it can be represented as follows \cite[Thm.~10.4.8]{Gerhardt:/2006}:
$$\tilde{M}=\{y\in \mathbb{S}^{n,1}: \sup_{x\in M}\langle x,y\rangle=0\}.$$
In this model of the hyperbolic space the point $(1,0,\ldots,0)$ is called the {\it{Beltrami point}}. For a given strictly convex hypersurface $M\sub\H^{n+1}$, $M$ bounds a strictly convex body $\hat{M}$ of the hyperbolic space, cf.~\cite[Thm.~10.3.1]{Gerhardt:/2006}, and due to the homogeneity of the hyperbolic space, any point in $\hat{M}$ may act as Beltrami point after suitable ambient change of coordinates. Therefore, in addition to the statement of \cref{DeSitterDuality}, \cite[Thm.~10.4.9.]{Gerhardt:/2006} implies that the dual $\~M$ is contained in the future of the slice $\{z^0=0\}$,
\eq{\~M\sub \S^{n,1}_{+}=\{z\in \S^{n,1}\cn z^0>0\}.}
We will also need the reverse direction starting from a strictly convex, spacelike hypersurface in $\S^{n,1}.$
\Theo{thm}{HyperbolicDuality}{\cite[Thm.~10.4.5]{Gerhardt:/2006}
Let $x\cn \~M\ra \S^{n,1}$ be a closed, connected, spacelike, strictly convex hypersurface of class $C^{m},$ $m\geq 3,$ such that, when viewed as a codimension 2 submanifold in $\R^{n+1,1}$, its Gaussian formula is
\eq{\~x_{;ij}=-\~g_{ij}\~x+\~h_{ij}x,}
where $\~x=\~x(\xi)$ is the embedding, $x$ the future directed normal vector , and $\~g_{ij}$, $\~h_{ij}$ the induced metric and the second fundamental form of the hypersurface in $\S^{n,1}$. Then we define the Gauss map as $x=x(\xi)$
\eq{x\cn \~M\ra \H^{n+1}\sub\R^{n+1,1}.}
The Gauss map is the embedding of a closed, connected, strictly convex hypersurface $M$ in $\H^{n+1}.$
Let $g_{ij},$ $h_{ij}$ be the metric and the second fundamental form of $M$, then, when viewed as a codimension 2 submanifold, $M$ satisfies the relations
\begin{align*}
x_{ij}&=g_{ij}x-h_{ij}\~x,\\
h_{ij}=&\~h_{ij}=\ip{x_{;i}}{\~x_{;j}},\\
\~\k_{i}&=\k_{i}^{-1},
\end{align*}
where $\k_{i}$, $\~\k_{i}$ are the corresponding principal curvatures.
}
The following illustration shall give a clearer picture of the duality:

\includegraphics[width=0.95\textwidth]{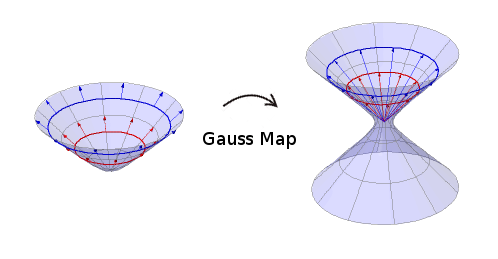}

\subsection*{Dual flows}
We want to deduce a duality relation for flows of strictly convex hypersurfaces in $\S^{n+1}$, as well as in $\H^{n+1}$ and $\S^{n,1}.$ A similar deduction of these results can be found in \cite[Sec.~4]{Gerhardt:/2015} and \cite{Yu:04/2016}.
For a curvature flow
\eq{\label{NormalFlow}\dot{x}=-\s f\nu,\quad \s=\ip{\nu}{\nu},}
we want to derive the flow equation of the Gauss maps $\~x$. Here $\ip{\cdot}{\cdot}$ represents the Euclidean and the Minkowski inner product respectively for flows in $\S^{n+1}$ and  $\H^{n+1}, \S^{n,1}$. In all three cases, the pair $x,\~x$ satisfies
\begin{align*}
\ip{x}{\~x}=0.
\end{align*}
Hence we have
\begin{align*}\ip{\dot{\~x}}{x}=-\ip{\~x}{\dot{x}}=-\ip{\~x}{- \s f\~x}= f.\end{align*}
Due to $\ip{\~x}{\~x_{;i}}=0$ and the Weingarten equation \cite[Lem.~9.2.4, Lem.~10.4.3]{Gerhardt:/2006},
\begin{align*}\ip{\dot{\~x}}{\~x_{;i}}=h^k_i\ip{\dot{\~x}}{x_{;k}}=-h^k_i\ip{\~x}{\dot{x}_{;k}}=h^k_if_{;k}.\end{align*}
Since $x=\~\nu$ and $\~x_{;i}$ span $T_{\~x}(\S^{n+1})$, $T_{\~x}(\S^{n,1})$ or $T_{\~x}(\H^{n+1})$ respectively, we obtain
\begin{align*}\dot{\~x}&=\ip{x}{x} f\~\nu+h^k_mf_{;k}\~g^{ml}\~x_{;l}\\
            &=\~\s f\~\nu+\~b^k_m\~g^{ml}f_{;k}\~x_{;l}\\
            &=\~\s f\~\nu+\~b^{kl}f_{;k}\~x_{;l},
\end{align*}
where $f$ is evaluated at $\mc{W}$. Let us put
\eq{\label{DualFlow-3}{\bf{f}}(\widetilde{\mc{W}}):=-f(\mc{W})=-\fr{1}{\~f(\mc{W}^{-1})}.}
Thus the flow of the polar hypersurfaces is governed by
\eq{\label{DualHyperbolic}\dot{\~x}=-\~\s{\bf{f}}\~\nu-\~b^{kl}{\bf{f}}_{;k}\~x_{;l},}
where $\~\s=\ip{\~\nu}{\~\nu}$ and ${\bf{f}}$ is evaluated at the ``correct" Weingarten map $\widetilde{\mc{W}}$. Hence we have shown a flow of the form
\eqref{NormalFlow} in the ambient spaces $\S^{n+1}, \H^{n+1},\S^{n,1}$ has a dual flow of the form \eqref{Flow} in the ambient spaces $\S^{n+1}, \S^{n,1},\H^{n+1}$ respectively.

\section{Locally symmetric spaces and proof of the main theorems}
In this section, we prove the Harnack inequalities. We restrict to locally symmetric spaces since in more general settings we do not know how to deal with the terms including derivatives of the Riemannian curvature tensor.

To prove our main theorems, we need a corollary of \cref{Ev-u-new} with bonus term $\beta$, cf. \cref{Homogeneous}. To prove \cref{Homogeneous}, we need to estimate $d_{\cW}^{2}f(\dot{\cW},\dot{\cW}).$ It is tempting to think that the mere convexity of the function $\Phi=\Phi(\k)$ (with the associated operator function $f$)   would be sufficient for this purpose. However, note that in \eqref{DerSymmFct-A}, the second term on the right-hand side requires the mixed terms $\dot{\cW}^{i}_{j}\dot{\cW}^{j}_{i}$ to be nonnegative while we are not aware whether $\dot{\cW}$ is $g$-selfadjoint in general, so some care should be taken. A similar issue arises when dealing with inverse concave curvature functions.

\Theo{lemma}{d2f}{
 Let $N$ be a spaceform and $f$ satisfy \cref{SpeedAss}. If $F$ is convex, then we have
 \eq{d^{2}_{\cW}f(\dot{\cW},\dot{\cW})\geq \fr{p-1}{p}f^{-1}d_{\cW}f(\dot{\cW})^{2}.}
If $F$ is inverse concave, then  we have
\eq{d^{2}_{\cW}f(\dot{\cW},\dot{\cW})+2d_{\cW}f(\dot{\cW}\circ\cW^{-1}\circ\dot{\cW})\geq \fr{p+1}{p}f^{-1}d_{\cW}f(\dot{\cW})^{2}.}
This inequality is reversed if $F$ is inverse convex.
 }
\pf{Let $F$ be convex. We first the case consider $p=1$. In a spaceform we have
\eq{\label{Lambda}\L=\overline{\mrm{Rm}}(\dot{x},\cdot,\nu,\cdot)=K_{N}\br{\-g(\dot{x},\cdot)\-g(\nu,\cdot)-\-g(\dot{x},\nu)g}=K_{N}fg.} In view of \eqref{DerSymmFct-A}, there holds
\eq{d^{2}_{\cW}f(\dot{\cW},\dot{\cW})=d^{2}\Phi(\k)(\mrm{diag}(\dot{\cW}),\mrm{diag}(\dot{\cW}))+\sum_{i\neq j}^{n}\fr{\fr{\del\Phi}{\del\k_{i}}-\fr{\del\Phi}{\del\k_{j}}}{\k_{i}-\k_{j}}\dot{\cW}^{i}_{j}\dot{\cW}^{j}_{i},}
where $f(s,\nu,\cdot)$ is the associated operator function to $\Phi$ fibrewise. By \eqref{BE-W} we have
\eq{\dot{\cW}^{i}_{j}=A^{i}_{k}\cW^{k}_{j},\quad i\neq j.}
At a point $\xi\in M$ choose an orthonormal basis $\{\~e_{i}\}$ of $T_{\xi}M$ such that the $\~e_{i}$ are principal directions, i.e., in this basis we have
\eq{\label{g-ONB}g_{ij}=\d_{ij},\quad h_{ij}=\k_{i}\d_{ij},\quad \cW^{i}_{j}=\k_{i}\d^{i}_{j}.}
By scaling the coordinates
\eq{\label{h-ONB}e_{i}:=\fr{\~e_{i}}{\sqrt{h(\~e_{i},\~e_{i})}}=\fr{\~e_{i}}{\sqrt{\k_i}},}
we obtain the matrix representations
\eq{h=(h_{ij})=(\d_{ij}),\quad \cW=(\cW^{i}_{j})=(\k_{i}\d^{i}_{j}).}
Note that $B$ is symmetric (e.q., $A$ is $h$-selfadjoint); therefore, $A^{i}_{j}=A^{j}_{i}$. Thus for each pair $i\neq j$ we have
\eq{\dot{\cW}^{i}_{j}\dot{\cW}^{j}_{i}=A^{i}_{k}\cW^{k}_{j}A^{j}_{m}\cW^{m}_{i}=\k_{i}\k_{j}A^{i}_{j}A^{j}_{i}\geq 0.}
Convexity of $\Phi$ and \cite[Lemma~2.20]{Andrews:/1994b} yield $\fr{\fr{\del\Phi}{\del\k_{i}}-\fr{\del\Phi}{\del\k_{j}}}{\k_{i}-\k_{j}}\geq 0.$ Therefore, we arrive at
\eq{d^{2}_{\cW}f(\dot{\cW},\dot{\cW})\geq 0.}
For $p\neq 1$ we calculate
\eq{d_{\cW}f(\dot{\cW})=|p|\p\psi F^{p-1}d_{\cW}F(\dot{\cW})}
and
\eq{\label{d2f-2}d^{2}_{\cW}f(\dot{\cW},\dot{\cW})&=|p|\p\psi F^{p-1}d^{2}_{\cW}F(\dot{\cW},\dot{\cW})+|p|(p-1)\p\psi F^{p-2}d_{\cW}F(\dot{\cW})^{2}\\
                            &\geq \fr{p-1}{|p|\p\psi F^{p}}d_{\cW}f(\dot{\cW})^{2}=\fr{p-1}{pf}d_{\cW}f(\dot{\cW})^{2}.}
Now suppose that $F$ is inverse concave. Again we first consider the case $p=1$.
For the inverse symmetric function $\~\Phi$ the corresponding $\~F$ has the property that
\eq{\label{InvConc-1}\~F(\cW)=\fr{1}{F(\cW^{-1})}}
and similarly for $f$ and $\~f$.
So for all $B\in T^{1,1}(M)$ we get
\eq{d_{\cW}\~f(B)=\~f^{2}d_{\cW^{-1}}f(\cW^{-1}\circ B\circ \cW^{-1})}
and
\eq{\label{d2f-1}d_{\cW}^{2}\~f(B,B)&=2\~f^{3}\br{d_{\cW^{-1}}f(\cW^{-1}\circ B\circ \cW^{-1})}^{2}\\
            &\hp{=}-\~f^{2}d^{2}_{\cW^{-1}}f(\cW^{-1}\circ B\circ \cW^{-1},\cW^{-1}\circ B\circ \cW^{-1})\\
            &\hp{=}-2\~f^{2}d_{\cW^{-1}}f(\cW^{-1}\circ B\circ \cW^{-1}\circ B\circ \cW^{-1}),}
where $\~f=\~f(\cW)$ and $f=f(\cW^{-1})$.

Take $B=\cW\circ\dot{\cW}\circ \cW.$
As above $d^{2}_{\cW}\~f(B,B)$ is given explicitly by
\eq{d^{2}_{\cW}\~f(B,B)=d^{2}\~\Phi(\k)(\mrm{diag}(B),\mrm{diag}(B))+\sum_{i\neq j}^{n}\fr{\fr{\del\~\Phi}{\del\k_{i}}-\fr{\del\~\Phi}{\del\k_{j}}}{\k_{i}-\k_{j}}B^{i}_{j}B^{j}_{i},}
where, in the same basis $\{e_{i}\}$ as above,
\eq{B^{i}_{j}=\cW^{i}_{k}A^{k}_{m}\cW^{m}_{l}\cW^{l}_{j}=\k_{i}\k_{j}^{2}A^{i}_{j}.}
Hence $B^{i}_{j}B^{j}_{i}\geq 0$ for each pair of $i\neq j$. So due to the concavity of $\~\Phi$ we have
\eq{d^{2}_{\cW}\~f(B,B)\leq 0.}
From \eqref{d2f-1} we obtain
\eq{d^{2}_{\cW^{-1}}f(\dot{\cW},\dot{\cW})+2d_{\cW^{-1}}f(\dot{\cW}\circ\cW\circ\dot{\cW})\geq 2\~fd_{\cW^{-1}}f(\dot{\cW})^{2}.}
This proves the claim when $p=1$.
For the general case, we use \eqref{d2f-2} to obtain
\eq{d^{2}_{\cW}f(\dot{\cW},\dot{\cW})&\geq |p|\p\psi F^{p-1}\br{2F^{-1}d_{\cW}F(\dot{\cW})^{2}-2d_{\cW}F(\dot{\cW}\circ\cW^{-1}\circ\dot{\cW})}\\
                    &\hp{=}+\fr{p-1}{pf}d_{\cW}f(\dot{\cW})^{2}\\
                    &=-2d_{\cW}f(\dot{\cW}\circ\cW^{-1}\circ\dot{\cW})+\fr{p+1}{pf}d_{\cW}f(\dot{\cW})^{2}.}
}
\Theo{lemma}{Homogeneous}{
Let the ambient space $N$ be locally symmetric (i.e.,
$\-{\n}\overline{\mrm{Rm}}=0$).
Suppose $f$ satisfies \cref{SpeedAss}. For $p\neq 0,-1$ and $\beta\in \R$, we put
\begin{align*}q:=t(u-\beta)+\fr{p}{p+1}.\end{align*}
Then for $p>0$ and any strictly convex solution to \eqref{Flow} there holds
\eq{\label{Ev-q}\mc{L}q&\geq\fr{t}{\p^{2}}\br{\p''\p+\fr{(1-p)\p'^{2}}{p}}f^{2}+\fr{2t\s}{p}\fr{\p'}{\p}fu+\fr{p+1}{p}(u-\beta)q\\
            &\hp{=}-t\fr{p+1}{p}(u-\beta)^{2}+t\fr{p-1}{p}u^{2}+\fr{2t}{p}\br{u-\fr{d_{\cW}f(\L^{\sharp})}{f}}^{2}\\
            &\hp{=}+t\s\br{1-\fr{d_{h}\zf\br{h(\cdot,\cdot)}}{f}}\overline{\mrm{Rm}}(\dot{x},\nu,\nu,\dot{x})+\fr{2t}{f} d_{h}\zf\br{\overline{\mrm{Rm}}(\cdot,\dot{x},\dot{x},\cW(\cdot))},
}
If $\overline{\mrm{Rm}}=0$ and $p<-1$, then this inequality still holds. If $\overline{\mrm{Rm}}=0$ and $-1<p<0,$ then the inequality is reversed.
}
\pf{
We consider two cases.

{\bf{Case 1:}}  $N=\R^{n+1}$ or $N=\R^{n,1}.$
In this case, equation \eqref{Ev-u} reads
\begin{align*}
\mathcal{L}u=&\br{\ln\p}''\dot{s}^2+\br{\ln\p}'\ddot{s}+\frac{\dot{s}}{f}\br{\ln\p}'d_{\cW}f(\dot{\cW})+\fr 1f d_{\cW}^{2}f(\dot{\cW},\dot{\cW})\\
        &+\fr 2f d_{h}\zf\br{h(A(\cdot),A(\cdot))}.
\end{align*}
Recall that $s=\s\ip{x}{\nu}.$
Hence taking derivative with respect to time yields
\begin{align*}\dot{s}&=\s\ip{\dot{x}}{\nu}=-\s f,\\
\ddot{s}&=-\s\dot{f}=\fr{\p'}{\p}f^2-\s d_{\cW}f(\dot{\mathcal{W}})=\fr{\p'}{\p}f^2-\s d_{\cW}f(A\circ\cW).
\end{align*}
We also have
\eq{\label{R=0-3} u=-\s \fr{\p'}{\p}f+\fr 1f d_{\cW}f(\dot{\mathcal{W}})=-\s \fr{\p'}{\p}f+\fr 1f d_{\cW}f(A\circ\cW).}
Moreover, since $f'$ and $\mc{W}$ commute we have
\eq{d_{h}\zf(h(A(\cdot),A(\cdot)))=\mrm{Tr}(f'\circ\cW\circ A^2)=\mrm{Tr}(f'\circ A^2\circ\cW)=d_{\cW}f(\dot{\cW}\circ\cW^{-1}\circ\dot{\cW}).}
These identities in conjunction with \cref{d2f} implies that if $p(p+1)>0$,
\eq{\label{R=0-1}
\mathcal{L}u&\geq \br{\ln\p}''\dot{s}^2+\br{\ln\p}'\ddot{s}+\frac{\dot{s}}{f}\br{\ln\p}'d_{\cW}f(\dot{\cW})+\fr{p+1}{p}f^{-2}d_{\cW}f(\dot{\mathcal{W}})^{2}\\
                &=\fr{\p''}{\p}f^{2}-\fr{2\s\p'}{\p}d_{\cW}f(\dot{\cW})+\fr{p+1}{p}u^{2}+\fr{2(p+1)\s\p'}{p\p}fu+\fr{p+1}{p}\fr{\p'^{2}}{\p^{2}}f^{2}\\
                &=\fr{p+1}{p}u^{2}+\fr{1}{\p^{2}}\br{\p''\p+\fr{(1-p)\p'^{2}}{p}}f^{2}+\fr{2\s}{p}\fr{\p'}{\p}fu,}
and if $-1<p<0$, the inequality is reversed.

{\bf{Case 2:}} $N$ has nonzero curvature and $p>0$. In case $N$ is a spaceform or as well in the case $f=\psi H$, due to \eqref{Lambda}, we have
\begin{align*}\mrm{Tr}(f'\circ (A\circ\L^{\sharp}-\L^{\sharp}\circ A))=0.\end{align*}
By \cref{InvConc} (or Remark \ref{prop2.4rem}) we have
\begin{align*}d_{h}\zf\br{h(A(\cdot),A(\cdot))}&=d_h\zf(g(A(\cdot),\cW\circ A(\cdot)))\\
					&=d_h\zf(g(\cdot,\ad(A)\circ\cW\circ A(\cdot)))\\
					&=d_{\cW}f(\ad(A)\circ\cW\circ A)\\
                        &=d_{\cW}f(\ad(\cW\circ A)\circ \cW^{-1}\circ\cW\circ A )\\
                        &\geq \fr 1p f^{-1}d_{\cW}f(\cW\circ A)^{2}\\
                        &=\fr{1}{p}f^{-1}d_{\cW}f(\dot{\cW}-\L^{\sharp})^{2}\\
                        &=\frac{1}{pf}\br{d_{\cW}f(\dot{\cW})^{2}-2d_{\cW}f(\dot{\cW})d_{\cW}f(\L^{\sharp})+d_{\cW}f(\L^{\sharp})^{2}}.
                        \end{align*}
Also, \cref{d2f} gives
\eq{d_{\cW}^{2}f(\dot{\cW},\dot{\cW})\geq \fr{p-1}{p}f^{-1}d_{\cW}f(\dot{\cW})^{2}.}
Note that this last inequality still holds if $F=\psi H$.

Using these observations and that $fu=d_{\cW}f(\dot{\mathcal{W}})$, we arrive at
\begin{align*}&\fr 2f d_{h}\zf\br{h(A(\cdot),A(\cdot))}+\fr 1fd_{\cW}^{2}f(\dot{\cW},\dot{\cW})\\
    \geq &~\fr{p+1}{p}f^{-2}d_{\cW}f(\dot{\cW})^2-\fr 4pf^{-2} d_{\cW}f(\dot{\cW})d_{\cW}f(\L^{\sharp})+\fr 2p f^{-2}d_{\cW}f(\L^{\sharp})^{2}\\
    =&~\fr{p+1}{p}u^2-\frac{4}{p}\frac{d_{\cW}f(\L^{\sharp})}{f}u+\frac{2}{p}\left(\frac{d_{\cW}f(\L^{\sharp})}{f}\right)^2.
\end{align*}

Therefore, from \eqref{Ev-u} we deduce that if $p>0$, then
\eq{\label{LocHom-1}\mathcal{L}u&\geq \frac{p+1}{p}u^2-\frac{4}{p}\frac{d_{\cW}f(\L^{\sharp})}{f}u+\frac{2}{p}\left(\frac{d_{\cW}f(\L^{\sharp})}{f}\right)^2
\\
&\hp{=}+\s\br{1-\fr{d_{h}\zf\br{h(\cdot,\cdot)}}{f}}\overline{\mrm{Rm}}(\dot{x},\nu,\nu,\dot{x})+\fr 2f d_{h}\zf\br{\overline{\mrm{Rm}}(\cdot,\dot{x},\dot{x},\cW(\cdot))}.
        }
In both cases (1) and (2), using \eqref{R=0-1} and \eqref{LocHom-1}, we obtain
\begin{align*}\mc{L}q&=u-\beta+t\mc{L} u\\
            &=\fr{p+1}{p}(u-\beta)q-t\fr{p+1}{p}(u-\beta)^{2}+t\mc{L}u\\
            &\geq\fr{t}{\p^{2}}\br{\p''\p+\fr{(1-p)\p'^{2}}{p}}f^{2}+\fr{2t\s}{p}\fr{\p'}{\p}fu+\fr{p+1}{p}(u-\beta)q\\
            &\hp{=}-t\fr{p+1}{p}(u-\beta)^{2}+t\fr{p-1}{p}u^{2}+\fr{2t}{p}\br{u-\fr{d_{\cW}f(\L^{\sharp})}{f}}^{2}\\
            &\hp{=}+t\s\br{1-\fr{d_{h}\zf\br{h(\cdot,\cdot)}}{f}}\overline{\mrm{Rm}}(\dot{x},\nu,\nu,\dot{x})+\fr{2t}{f} d_{h}\zf\br{\overline{\mrm{Rm}}(\cdot,\dot{x},\dot{x},\cW(\cdot))},
\end{align*}
with reversed inequality if $-1<p<0$ and $\overline{\mrm{Rm}}=0.$
}
We are now ready to prove various Harnack inequalities.
\subsection*{Euclidean and Minkowski space}
In the case that the ambient curvature vanishes, we obtain the following Harnack inequalities for anisotropic flows claimed in Theorem \ref{Euclidean}. In particular, the theorem includes and extends the well-known Harnack inequalities from \cite{Andrews:09/1994} in the Euclidean space and they are completely new in the Minkowski space.
\Theo{thm}{R=0}{
Let $N$ be either the Euclidean or the Minkowski space and let the assumptions of \cref{Euclidean} be satisfied. Then along \eqref{Flow}, if $p>0$ or $p<-1$, there holds
\begin{align*}tu+\fr{p}{p+1}\geq 0,\end{align*}
and if $-1<p<0$ the inequality is reversed. Moreover, if $p=-1,\varphi=1$ and $F$ is inverse concave, then
$\inf u$
is increasing. Also, if $p=-1,\varphi=1$ and $F$ is inverse convex, then
$\sup u$
is decreasing.
In particular, \cref{Euclidean} holds.
}
\pf{Apply \eqref{Ev-q} with $\beta=0$ to obtain that $q$ satisfies
\begin{align*}\mc{L}q\geq \fr{2\s}{p}\fr{\p'}{\p}fq-\fr{2\s}{p+1}\fr{\p'}{\p}f+\fr{p+1}{p}uq\end{align*}
with reversed inequality if $-1<p<0$. For the Euclidean ambient space, the maximum principle gives the Harnack estimate. If $N=\mathbb{R}^{n,1}$, due to our assumptions in Theorem \ref{Euclidean}, we can apply the maximum principle on the compact set $K$ and prove the claimed Harnack inequalities in each case. This proves \cref{Euclidean} (and also \cref{QuotientsofEuc}) in case $p\neq -1$.

If $p=-1,\varphi=1$, note that in view of Lemma \ref{d2f}, the right-hand side of \eqref{Ev-u} is positive (negative) if $F$ is inverse concave (inverse convex).
}
\subsection*{Locally symmetric Einstein spaces of non-negative sectional curvature}
Here we obtain a Harnack inequality for the mean curvature flow:
\Theo{thm}{LocSymm}{
Suppose $N$ is a Riemannian locally symmetric Einstein space with non-negative sectional curvature. Let $f=\psi(\nu)H$ with $\psi\in C^{\8}(\mathbb{U})$ invariant under parallel transport. Then for any strictly convex solution to \eqref{Flow} there holds
\eq{\label{Bonus}t\br{u-\fr{\-R}{n+1}}+\fr{1}{2}\geq 0,}
where $\-R$ is the scalar curvature. In particular, \cref{Einstein} holds.
}
\pf{
We use \eqref{Ev-q} with
$\beta=\fr{\-R}{n+1},$
where $R$ is the scalar curvature. In this situation we have $\s=1$ and $d_{\cW}f(\cW)=H$ and hence the last line of \eqref{Ev-q} is non-negative. Furthermore, there holds
\begin{align*}d_{\cW}f(\L^{\sharp})=-\mrm{Tr}(\overline{\mrm{Rm}}(\cdot,\dot{x},\nu,\cdot))=-\overline{\mrm{Rc}}(\dot{x},\nu)=\fr{\-R}{n+1}f.\end{align*}
Hence the claim follows from the maximum principle applied to \eqref{Ev-q}.
}
\subsection*{Riemannian spaces of constant positive curvature}
For the spherical space, inequality \eqref{Bonus} is  the Harnack inequality with a bonus term in \cite{BryanIvaki:08/2015}.
The next theorem recovers our  Harnack inequalities without bonus terms in \cite{BIS1}.
\Theo{thm}{Sphere}{
Let $N$ be a Riemannian spaceform of sectional curvature $K_{N}=1$ and $f$ satisfy \cref{SpeedAss} with $0<p\leq 1$. Then for any strictly convex solution to \eqref{Flow} there holds
\begin{align*}tu+\fr{p}{p+1}\geq 0.\end{align*}
In particular, \cref{SphereHarnack}-(1) holds.
}
\pf{
The last line of \eqref{Ev-q} is non-negative. Using \eqref{Lambda}
we calculate
\eq{-t\fr{p+1}{p}u^{2}&+t\fr{p-1}{p}u^{2}+\fr{2t}{p}\br{u-\fr{d_{\cW}f(\L^{\sharp})}{f}}^{2}\\
&=-\fr{4t}{p}\fr{d_{\cW}f(\L^{\sharp})}{f}u+\fr{2t}{p}\br{\fr{d_{\cW}f(\L^{\sharp})}{f}}^{2}\\
&\geq -\fr{4}{p}\fr{d_{\cW}f(\L^{\sharp})}{f}q+\fr{4}{p+1}\fr{d_{\cW}f(\L^{\sharp})}{f}.
    }
Hence the maximum principle implies the claim.
}
By applying the dual flow method developed in \Cref{Duality}, we obtain pseudo-Harnack inequalities
for a class of inverse curvature flows.
 \Theo{thm}{ExpandingSphere}{
Suppose $N=\S^{n+1}$ and $F$ is a positive, strictly monotone, inverse convex and $1$-homogeneous curvature function. Let $-1\leq p<0$ and $f=-F^{p}.$ Then for any strictly convex solution of \eqref{NormalFlow} we have
\begin{align*}\del_t\br{ft^{\fr{p}{p-1}}}\leq 0.\end{align*}
In particular, \cref{pseudoHarnack}-(1) holds.
}
\pf{
The dual flow of \eqref{DualHyperbolic} with speed
\begin{align*}{\bf{f}}(\~{\mc{W}})=-f(\mc{W})=-\mrm{sgn}(p)F^{p}(\mc{W})=\mrm{sgn}(-p)\~F^{-p}(\~{\mc{W}})\end{align*}
satisfies the assumptions of \cref{Sphere}, which in particular implies
\begin{align*}\del_t\br{{\bf{f}} t^{\fr{-p}{-p+1}}}\geq 0.\end{align*}
}
\subsection*{Lorentzian spaces of constant positive curvature}
For flows of spacelike hypersurfaces in Lorentzian manifolds of nonvanishing curvature, the second line of \eqref{Ev-q} can behave rather differently, since $\nu$ is timelike. In the de Sitter space of constant sectional curvature $K_{N}=1$, we obtain a similar result as in the spherical case, but only for flows with principal curvatures bounded by $1$. This further assumption is equivalent to convexity by horospheres for the dual hypersurfaces in the hyperbolic space and hence seems to be a natural assumption for flows in the de Sitter space.
\Theo{thm}{DeSitter}{
Let $N$ be a Lorentzian spaceform of sectional curvature $K_{N}=1$ and let $f$ satisfy \cref{SpeedAss} with $0<p\leq 1.$ Then for any spacelike solution $x$ of \eqref{Flow} that the condition $0<\kappa_i\leq 1$ is always satisfied on $[0,T^{\ast})$ there holds
\begin{align*}tu+\fr{p}{p+1}\geq 0.\end{align*}
In particular, \cref{SphereHarnack}-(2) holds.
}
\pf{
Recall that $-V=\dot{x}+\sigma f\nu$ and it is a spacelike vector. We start with the following observations:
\begin{align*}
\overline{\mrm{Rm}}(\dot{x},\nu,\nu,\dot{x})&=-\-g(\dot{x},\dot{x})-\-g(\dot{x},\nu)^2\\
&=f^2-\-g(V,V)-f^2\leq 0,
\end{align*}
 and
\begin{align*}
d_{h}\zf\br{\overline{\mrm{Rm}}(\cdot,\dot{x},\dot{x},\cW(\cdot))}&=d_{h}\zf\br{\-g(\dot{x},\dot{x})g(\cdot,\cW(\cdot))-\-g(\cdot,\dot{x})\-g(\dot{x},\cW(\cdot))}\\
&=\mrm{Tr}_{(f'\circ\cW)^{\sharp}}\br{-f^{2}g+g(V,V)g-g(\cdot,V)g(\cdot,V)}\\
&\geq -f^{2}d_{\cW}f(\cW)=-pf^3.
\end{align*}
The identity \eqref{Ev-q} with $\beta=0$ and these last two inequalities
imply that
\eq{\label{xx}\mathcal{L}q\geq&\br{\fr{p+1}{p}u-\fr{4}{p}\fr{d_{\cW}f(\L^{\sharp})}{f}}q
+\fr{2t}{p}\left(\br{\fr{d_{\cW}f(\L^{\sharp})}{f}}^2-(pf)^{2}\right)\\
            &+\fr{4}{p+1}\fr{d_{\cW}f(\L^{\sharp})}{f}.}
In addition, since $0<\kappa_i\leq 1$ and $\L^{\sharp}=f\operatorname{Id}$, the monotonicity of $f$ gives
\[pf=d_{\cW}f(\cW)\leq d_{\cW}f(\operatorname{Id})=\frac{d_{\cW}f(\L^{\sharp})}{f}.\]
The result now follows from the maximum principle.
}
\Theo{rem}{}{
In the de Sitter space, we cannot expect to obtain a Harnack estimate with a bonus term for mean curvature flow as in the spherical case. To see that, we will look at ancient solutions with $0<\kappa_i\leq 1$ to the mean curvature flow.

The evolution equation of $H$ is given by
\[\partial_tH=\Delta H+T\ast\nabla H-|A|^2H+nH.\]
If there was a Harnack inequality for mean curvature flow of the following form
\[\partial_tH-nH+\frac{H}{2t}\geq 0,\]
then for an ancient solution we would have
$\partial_tH-nH\geq 0.$
So evolution equation of $H$ would yield
$\Delta H+T\ast\nabla H-|A|^2H\geq 0;$ therefore, $H(\cdot,t)=0.$
}

With precisely the same proof as for \cref{Sphere}, we obtain, using \eqref{DualHyperbolic} and \cref{DeSitter}, the following pseudo-Harnack inequality for expanding flows of the hyperbolic space, which is to our knowledge the first such inequality for hypersurface flows in the hyperbolic space:
\Theo{thm}{Hyperbolic}{Let $N=\H^{n+1}$ and $F$ be a positive, strictly monotone, inverse convex and 1-homogeneous curvature function. If $-1\leq p<0$, then any horoconvex solution to \eqref{NormalFlow} with speed $f=-F^{p}$
satisfies
\[\del_t\br{ft^{\fr{p}{p-1}}}\leq 0.\]
In particular, \cref{pseudoHarnack}-(2) holds.
}
\pf{The speed of the dual flow \eqref{DualHyperbolic} is
\[{\bf{f}}(\~{\mc{W}})=-f(\mc{W})=-\mrm{sgn}(p)F^p(\mc{W})=\mrm{sgn}(-p)\~F^{-p}(\~{\mc{W}}).\]
Thus the assumptions of \cref{DeSitter} are satisfied with $0<-p\leq 1$; therefore,
\[\del_t\br{{\bf{f}} t^{\fr{-p}{-p+1}}}\geq 0\]
and the claim follows.
}

\section{Cross curvature flow}\label{cross}
Let $(M^3,g)$ be a Riemannian 3-manifold with negative sectional curvature. The cross curvature tensor is defined by
\[c_{ij}:=(E^{-1})_{ij}\det E=\frac{1}{2}g_{ik}g_{jl}\mu^{kpq}\mu^{lrs}E_{pr}E_{qs}=\frac{1}{8}\mu^{pqk}\mu^{rsl}R_{ilpq}R_{kjrs},\]
where $E_{ij}:=R_{ij}-\frac{1}{2}Rg_{ij}$ is the Einstein tensor, $R_{ijkl}$ is the Riemann curvature tensor, $R_{ij}$ is the Ricci curvature tensor, $R$ is the scalar curvature, $\det E:=\det E_{ij}/\det g_{ij}$ and $\mu^{ijk}$ are the components of the volume form.

A one-parameter family of 3-manifolds $(M,g(t))$ with negative sectional curvature is a solution of the XCF if
\[\partial_tg_{ij}=2c_{ij}.\]
Now suppose the metrics are locally isometrically embeddable in Minkowski space $\mathbb{R}^{3,1}$. The following observation is due to Andrews, which recently appeared in \cite{AndrewsChenFangMcCoy:/2015}.

Recall that the Gauss equation in $\mathbb{R}^{3,1}$ reads\footnote{To provide a better comparability with the references mentioned in this section, the convention for the Riemannian curvature tensor here differs from our convention in the previous sections.}
\[
R_{ijkl} = -(h_{ik}h_{jl} - h_{il}h_{jk}).
\]
Tracing with respect to $g^{ik}$ gives
\[
R_{jl} = -(Hh_{jl} - h^k_lh_{jk}), \quad R = -(H^2 - |A|^2),
\]
where $|A|^2=g^{ik}g^{jl}h_{ij}h_{kl}.$
Thus we have
\[
E_{ij} = \left(\frac{H}{2}g_{ij} - h_{ij}\right)H + \left(h^k_ih_{kj} - \frac{1}{2}|A|^2g_{ij}\right).
\]
In an orthonormal frame which diagonalizes the second fundamental form, we get for $i=1$:
\[
\begin{split}
E_{11} &= \frac{1}{2}\left(H^2 - |A|^2\right) + h^1_1 h_{11} - Hh_{11} \\
&=h_{11}h_{22} + h_{11}h_{33} + h_{22}h_{33} + h_{11}^2 - \left(h_{11} + h_{22} + h_{33}\right) h_{11} \\
&= h_{22}h_{33},
\end{split}
\]
and similarly for $i=2,3$. That is,
\[
E = \begin{pmatrix}
\kappa_2 \kappa_3 & 0 & 0 \\
0 & \kappa_1\kappa_3 & 0 \\
0 & 0 & \kappa_1\kappa_2
\end{pmatrix},
\]
where $\kappa_i$ denote the principal curvatures. In particular, $\det E = K^2,$ where $K$ is the Gauss curvature. If $M$ is strictly convex, then the matrix $E$ is positive definite. In this case, the cross curvature tensor is
\[
c_{ij} = ({\det} E) (E^{-1})_{ij} = \begin{pmatrix}
\kappa_1^2 \kappa_2 \kappa_3 & 0 & 0 \\
0 & \kappa_1\kappa_2^2 \kappa_3 & 0 \\
0 & 0 & \kappa_1\kappa_2\kappa_3^2
\end{pmatrix} = Kh_{ij}.
\]
Now the uniqueness result of Buckland \cite{Buckland:/2006} shows that $(M,g(t))$ is a solution of (\ref{FlowStandard}) with $N=\mathbb{R}^{3,1}$, $f=K$.

The Harnack inequality for the cross curvature flow for metrics that are locally isometrically embeddable in Minkowski space $\mathbb{R}^{3,1}$ now follows from \Cref{Euclidean}:
\begin{align*}\label{cross harnack}
\partial_t \sqrt{{\det} E} - \frac{1}{\sqrt{{\det}E}}E^{ij} \nabla_i \left(\sqrt{{\det}E}\right) \nabla_j \left(\sqrt{{\det}E}\right) + \frac{3}{4 t} \sqrt{{\det}E} \geq 0.
\end{align*}

\bibliographystyle{amsplain}
\bibliography{Bibliography}
\end{document}